\documentclass[11pt]{amsart}
\textheight 8.5in \textwidth 6.5in \evensidemargin .1in \oddsidemargin .1in \topmargin .25in \headsep .1in \headheight 0.2in \footskip .5in

\usepackage{graphicx}
\usepackage[mathcal]{euscript}
\usepackage{float}
\usepackage{amsmath,amsthm,amssymb,amsfonts,amscd,epsfig,latexsym,graphicx,textcomp}

\restylefloat{figure}

\newtheorem{Theorem}{Theorem}
\newtheorem{Corollary}[Theorem]{Corollary}

\newtheorem{theorem}{Theorem}[section]
\newtheorem{lemma}[theorem]{Lemma}
\newtheorem{corollary}[theorem]{Corollary}
\newtheorem{proposition}[theorem]{Proposition}

\theoremstyle{definition}
\newtheorem{definition}[theorem]{Definition} 
\newtheorem{example}[theorem]{Example}

\theoremstyle{remark}
\newtheorem{remark}[theorem]{Remark}

\numberwithin{equation}{section}

\newcommand{\ra}{\rightarrow}

\newcommand{\BR}{\mathbb{R}}

\newcommand{\BZ}{\mathbb{Z}}





\newcommand{\ZZ}{{\mathbb Z}}
\newcommand{\RR}{{\mathbb R}}
\newcommand{\CC}{{\mathbb C}}




\begin{document}

\title{On the rotation class of knotted Legendrian tori in $\mathbb{R}^5$}

\author[S. Baldridge]{Scott Baldridge}
\author[B. McCarty]{Ben McCarty}

\thanks{S. Baldridge was partially supported by NSF Grant DMS-0748636.}

\address{Department of Mathematics, Louisiana State University \newline
\hspace*{.375in} Baton Rouge, LA 70817, USA} \email{\rm{sbaldrid@math.lsu.edu}}

\address{Department of Mathematical Sciences, University of Memphis \newline
\hspace*{.375in} Memphis, TN, 38152, USA} \email{\rm{bmmccrt1@memphis.edu}}

\subjclass{}
\date{}

\begin{abstract}
In this paper we show how to combinatorically compute the rotation class of a large family of embedded Legendrian tori in $\BR^5$ with the standard contact form.  In particular, we give a formula to compute the Maslov index for any loop on the torus and compute the Maslov number of the Legendrian torus. These formulas are a necessary component in computing contact homology.  Our methods use a new way to represent knotted Legendrian tori called Lagrangian hypercube diagrams.
\end{abstract}

\maketitle

\bigskip
\section{Introduction}
\bigskip

Compared to Legendrian knots in $\RR^3$, little is known about knotted Legendrian submanifolds $L^n$ embedded in $\BR^{2n+1}$.  One reason is that in higher dimensions there are no standard representations of embedded Legendrian submanifolds  that enable one to study with the same facility as front projections or Lagrangian projections of Legendrian knots in $\RR^3$.  For example, one may easily compute the classical invariants of Thurston-Bennequin and rotation numbers by looking at the front projection of a knot in $\BR^3$.  Moreover, the classical invariants are quite effective at distinguishing many knots up to Legendrian isotopy:  torus knots, for example have been shown to be classified by their classical invariants (cf. \cite{etnyrehonda}).  

\medskip

While the Thurston-Bennequin number may be generalized to higher dimensions, it is not always as useful as it is for knots in dimension $3$.  In the case we study in this paper, knotted Legendrian tori $L \in \BR^5$, the Thurston-Bennequin invariant is well defined (cf. \cite{T}), but uninteresting since it is always equal to zero.  In fact, the Thurston-Bennequin number in $\RR^{2n+1}$ equals $\frac12\chi(L)$ when $n$ is even.  Furthermore, while topological knot type provides an additional invariant for Legendrian knots in $\RR^3$, all knotted Legendrian surfaces in $\RR^5$ are topologically equivalent provided they are of the same genus.

\medskip

The rotation class is harder to generalize to higher dimensions.  Unlike the Thurston-Bennequin number, which may be defined in terms of a linking number, the rotation number requires the computation of the homotopy class of a map from $L$ to the space of Lagrangians of $\RR^4$ with symplectic structure induced by the contact form on $\RR^5$.  Since writing down this map is non-trivial this invariant is more difficult to compute in higher dimensions.

\medskip

Lagrangian hypercube diagrams overcome the difficulties involved in studying knotted Legendrian tori in $\RR^5$, by providing a way to construct explicit embeddings of Legendrian tori. Using the explicit map defined by a Lagrangian hypercube diagram we demonstrate that the rotation class may be calculated combinatorially as follows:

\bigskip

\begin{Theorem} 
\label{thm:main1}
Given a Lagrangian hypercube diagram $H\Gamma = (C, \{\mathcal{W}, \mathcal{X},\mathcal{Y},\mathcal{Z}\}, G_{zx}, G_{wy})$ with Lagrangian grid diagram projections $G_{zx}$ and $G_{wy}$ in $\RR^2$, and let $L \subset \RR^5$ be the embedded Legendrian torus determined by the lift of the Lagrangian torus defined by $H\Gamma$.  Let $H_1(L) = \langle \tilde{\gamma}_{zx}, \tilde{\gamma}_{wy}\rangle$ be generated by $\tilde{\gamma}_{zx}$ and $\tilde{\gamma}_{wy}$ as in Theorem~\ref{thm:lift}.   Then, the rotation class of $L$, $r(L)$, satisfies:
$$r(L) = (w(G_{zx}), w(G_{wy})),$$
where $w(G_{zx})$ is the winding number of the immersed curve determined by $G_{zx}$.  In particular, the winding number can be computed combinatorically from the Lagrangian grid diagram projection:
$$w(G) = \frac{1}{4} (\#(\text{counterclockwise oriented corners of G}) - \#(\text{clockwise oriented corners of G})).$$
\end{Theorem}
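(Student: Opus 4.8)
The plan is to read the rotation class off the Lagrangian Gauss map and to evaluate it on the two generating loops produced by Theorem~\ref{thm:lift}, after which the combinatorial formula reduces to a turning-number count. Symplectic reduction of the contact $\RR^5$ yields the symplectic $\RR^4=\RR^2_{zx}\oplus\RR^2_{wy}$, whose Lagrangian Grassmannian $\La(2)=U(2)/O(2)$ has $\pi_1(\La(2))\cong\ZZ$, detected by the squared-determinant map $\det^2\colon\La(2)\to S^1$. The rotation class $r(L)$ is the homotopy class of the Gauss map $\cG\colon L\to\La(2)$, $p\mapsto T_pL$; since $L\cong T^2$ and $H_1(L)=\langle\tilde\gamma_{zx},\tilde\gamma_{wy}\rangle$ by Theorem~\ref{thm:lift}, this class is recorded by its two integer values on the generating loops. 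The first step is therefore to reduce the theorem to evaluating $r(L)$ on $\tilde\gamma_{zx}$ and on $\tilde\gamma_{wy}$.

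Using the explicit embedding encoded by $H\Gamma$ and its lift (Theorem~\ref{thm:lift}), I would show that along $\tilde\gamma_{zx}$ the tangent plane $T_pL$ splits as the span of the tangent line to the immersed curve $G_{zx}$ in $\RR^2_{zx}$ together with a line in $\RR^2_{wy}$ that stays constant, the latter because $\tilde\gamma_{zx}$ projects to a point in the $wy$-grid. Consequently the winding of $\cG$ along $\tilde\gamma_{zx}$ is governed entirely by the rotation of the tangent line to $G_{zx}$, and comparing with the normalization of the Maslov class identifies the $\tilde\gamma_{zx}$-component of $r(L)$ with the turning number of $G_{zx}$, namely $w(G_{zx})$. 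The symmetric argument, interchanging the two grid projections, gives the $\tilde\gamma_{wy}$-component, so that $r(L)=(w(G_{zx}),w(G_{wy}))$.

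For the combinatorial formula, note that the immersed curve determined by a Lagrangian grid diagram projection $G$ is a closed piecewise-linear curve made of horizontal and vertical segments meeting at right-angle corners. The tangent direction is constant along each straight segment and turns by $+\tfrac{\pi}{2}$ at each counterclockwise-oriented corner and by $-\tfrac{\pi}{2}$ at each clockwise-oriented corner. Hence the total signed turning equals $\tfrac{\pi}{2}\bigl(\#(\text{counterclockwise corners})-\#(\text{clockwise corners})\bigr)$, and dividing by $2\pi$ --- the turning of a curve of winding number one --- gives $w(G)=\tfrac14\bigl(\#(\text{counterclockwise corners})-\#(\text{clockwise corners})\bigr)$.

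The main obstacle is the factorization in the second step: one must verify from the explicit formulas of Theorem~\ref{thm:lift} that $\cG$ genuinely decouples along each generator, that the smoothing needed to make $L$ an embedded smooth torus does not alter the integer winding, and that the orientation conventions on $L$ and on the two symplectic planes are compatible so that counterclockwise corners contribute with the correct sign. The fact that $\tilde\gamma_{zx}$ and $\tilde\gamma_{wy}$ project to a point in the complementary grid --- which is what makes the product decomposition of $\det^2\circ\cG$ valid and the two windings independent --- is exactly the place where the combinatorial structure of the hypercube diagram does the essential work.
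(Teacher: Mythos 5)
Your overall strategy --- evaluate the class on the two generating loops $\tilde\gamma_{zx}$, $\tilde\gamma_{wy}$, observe that along each one the tangent plane decouples into the moving tangent direction of one grid projection and a constant direction coming from the other, and then convert the resulting winding into a signed corner count --- is exactly the route the paper takes (Theorem~\ref{isom}, Lemma~\ref{lemma:gen}, Lemma~\ref{lemma:rotation}), and your final paragraph correctly isolates the points (smoothing, orientation conventions, the decoupling itself) that the paper treats only implicitly. The genuine problem is your identification of the target. You declare $r(L)$ to be the homotopy class of the Gauss map $\cG\colon L\to U(2)/O(2)$ into the \emph{unoriented} Lagrangian Grassmannian, with $\pi_1$ detected by $\det^2$. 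That object is the Maslov class, not the rotation class as defined in Section~\ref{section:rotation}: there $r(L)$ is the homotopy class of the unitary trivialization $df_\CC\colon TL\otimes\CC\to\CC^2$, i.e.\ an element of $[L,U(2)]$, with $\pi_1(U(2))$ detected by $\det$ (this is the content of Lemma~\ref{lemma:gen} and the explicit frames $U_{xy},U_{yz},U_{yx},U_{zy}$ of Section~\ref{section:compute}). The projection $U(2)\to U(2)/O(2)$ induces multiplication by $2$ on $\pi_1$, so the invariant you set up evaluates to $2w(G_{zx})$ and $2w(G_{wy})$ on the generators --- which is Corollary~\ref{cor:main11}, not Theorem~\ref{thm:main1}. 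Concretely: along $\tilde\gamma_{zx}$ the unoriented tangent \emph{line} to $G_{zx}$ traverses $U(1)/O(1)\cong\RP^1$ twice for every full turn of the tangent \emph{vector}, so the winding of $\det^2\circ\cG$ is $2w(G_{zx})$. The phrase ``comparing with the normalization of the Maslov class'' is precisely where this factor of $2$ is being absorbed without justification; to obtain the stated formula you must work with the framed object (the tangent vector, equivalently the unitary frame in $U(2)$) and the generator of $\pi_1(U(2))$ detected by $\det$, as the paper does.

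Once that correction is made, the rest of your argument goes through and coincides with the paper's: the decoupling along each generator is exactly the observation that $\tilde\gamma_{zx}$ projects to a single marking of $G_{wy}$ so that only the $zx$-factor of the frame rotates, and your turning-number computation of $w(G)$ as $\tfrac14$ of the signed corner count is the paper's Lemma~\ref{lemma:rotation}.
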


\bigskip

\begin{example}
 
Let $H\Gamma$ be the Lagrangian hypercube diagram constructed from the Lagrangian grid diagrams shown in Figure \ref{fig:5x5unknots} (Theorem~\ref{thm:construction}).  The Lagrangian hypercube determines an immersed Lagrangian torus $T$ (Theorem~\ref{thm:torus}).  The lift of the Lagrangian torus $T$ is a knotted, embedded Legendrian torus $L$ (Theorem~\ref{thm:lift}).   By Theorem \ref{thm:main1}, the rotation class of the Legendrian torus $L$ is $r(L) = (1,0)$.

\begin{figure}[h]
\includegraphics[scale = 1]{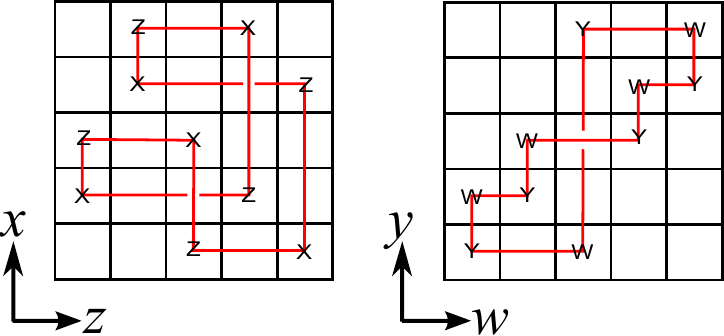}
\caption{Unknots with rotation number $1$ and $0$ respectively..}
\label{fig:5x5unknots}
\end{figure}

\end{example}

Recall that the Maslov index, as defined in \cite{RobSal} and \cite{rotation}, may be viewed as a map $\mu: H_1(L) \rightarrow \ZZ$.  

\begin{Corollary}
\label{cor:main11}
For $(a,b) \in H_1(L)= \langle \tilde{\gamma}_{zx}, \tilde{\gamma}_{wy}\rangle$, the Maslov index is 
$$\mu(A) = 2aw(G_{zx}) + 2bw(G_{wy}).$$
\end{Corollary}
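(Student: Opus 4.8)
The plan is to derive the Corollary directly from Theorem~\ref{thm:main1} together with the functorial (additivity) property of the Maslov index as a homomorphism $\mu : H_1(L) \to \ZZ$. Since $H_1(L) = \langle \tilde{\gamma}_{zx}, \tilde{\gamma}_{wy}\rangle$ is a free abelian group of rank two, and $\mu$ is additive on homology classes, it suffices to compute $\mu$ on the two generators and then extend linearly. That is, writing $A = a\tilde{\gamma}_{zx} + b\tilde{\gamma}_{wy}$, I expect
$$\mu(A) = a\,\mu(\tilde{\gamma}_{zx}) + b\,\mu(\tilde{\gamma}_{wy}),$$
so the whole problem reduces to identifying $\mu(\tilde{\gamma}_{zx})$ and $\mu(\tilde{\gamma}_{wy})$ in terms of the winding numbers $w(G_{zx})$ and $w(G_{wy})$.

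Next I would make precise the relationship between the Maslov index of a loop and the rotation class. The rotation class $r(L)$ in Theorem~\ref{thm:main1} is recorded as the pair $(w(G_{zx}), w(G_{wy}))$, which should be interpreted as the values taken by the Maslov-type map on the two generators, up to the standard normalization. The key identity to establish is the factor of $2$: for the generator $\tilde{\gamma}_{zx}$ the associated loop in the Lagrangian Grassmannian winds $w(G_{zx})$ times in each of the two orthogonal symplectic planes determined by the product structure of the grid projections, and the Maslov index of a loop of Lagrangians counts twice the degree of the determinant-squared map $\det^2 : \Lambda(n) \to S^1$. Tracking this through the explicit construction should give $\mu(\tilde{\gamma}_{zx}) = 2 w(G_{zx})$ and similarly $\mu(\tilde{\gamma}_{wy}) = 2 w(G_{wy})$.

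Concretely, the steps are: first, recall from the proof of Theorem~\ref{thm:main1} the explicit Lagrangian-valued map $L \to \Lambda(2)$ and its restriction to each generating loop; second, verify that along $\tilde{\gamma}_{zx}$ the loop in $\Lambda(2)$ decomposes according to the $G_{zx}$ and $G_{wy}$ projections, with the winding contributed entirely by $G_{zx}$; third, apply the definition of the Maslov index via $\det^2$ (following \cite{RobSal} and \cite{rotation}) to convert the winding number into the index, yielding the factor of $2$; and finally, invoke additivity of $\mu$ to assemble $\mu(A) = 2a\,w(G_{zx}) + 2b\,w(G_{wy})$.

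The main obstacle I anticipate is the second step: bookkeeping the precise splitting of the Lagrangian loop into the two symplectic planes and confirming that the contribution of $G_{wy}$ to $\mu(\tilde{\gamma}_{zx})$ vanishes (and vice versa). This amounts to checking that the two grid-diagram projections contribute \emph{independently} to the Maslov index along each generator, with no cross terms—a fact that should follow from the product structure built into the Lagrangian hypercube diagram but which requires one to track orientations and corner contributions carefully so that the combinatorial winding-number formula at the end of Theorem~\ref{thm:main1} feeds correctly into the factor-of-$2$ normalization of $\mu$.
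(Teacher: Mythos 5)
Your proposal is correct and follows essentially the same route as the paper: the paper's proof likewise passes the loop of tangent Lagrangian planes through the isomorphism from the proof of Theorem~\ref{thm:main1} and obtains the factor of $2$ by identifying Lagrangian planes that differ only in orientation (your $\det^2$ normalization), then the formula for general $(a,b)$ follows from $\mu$ being a homomorphism. Your version is somewhat more explicit about the linear extension from the generators, which the paper leaves implicit, but the substance is the same.
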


The Maslov number of the torus $L$ is the smallest positive number that is the Maslov index of some nontrivial loop (cf. \cite{rotation}).  Thus Corollary \ref{cor:main11} enables us to compute the Maslov number of $L$ as follows:

\begin{Corollary}
\label{cor:main12}
The Maslov number of $L$ is the non-negative number $2gcd(w(G_{zx}), w(G_{wy}))$.
\end{Corollary}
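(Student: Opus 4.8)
The plan is to reduce the statement to an elementary fact about subgroups of $\ZZ$, using Corollary~\ref{cor:main11} to make the Maslov index completely explicit. Write $p = w(G_{zx})$ and $q = w(G_{wy})$ for brevity. By Corollary~\ref{cor:main11}, under the identification $H_1(L) = \langle \tilde{\gamma}_{zx}, \tilde{\gamma}_{wy}\rangle \cong \ZZ^2$, the map $\mu$ sends the class $(a,b)$ to $2ap + 2bq = 2(ap + bq)$. Thus $\mu$ is a group homomorphism $\ZZ^2 \to \ZZ$, and its image is precisely the subgroup of $\ZZ$ generated by the two values $\mu(\tilde{\gamma}_{zx}) = 2p$ and $\mu(\tilde{\gamma}_{wy}) = 2q$.

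Next I would invoke B\'ezout's identity in the form: the subgroup of $\ZZ$ generated by two integers $m$ and $n$ equals $\gcd(m,n)\,\ZZ$. Applying this with $m = 2p$ and $n = 2q$ gives
$$\im \mu = \gcd(2p, 2q)\,\ZZ = 2\gcd(p,q)\,\ZZ,$$
where the final equality uses $\gcd(2p,2q) = 2\gcd(p,q)$.

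With the image identified, the conclusion follows directly from the definition of the Maslov number as the smallest positive Maslov index attained on a nontrivial loop. If $\gcd(p,q) > 0$, then the smallest positive element of the subgroup $2\gcd(p,q)\,\ZZ$ is $2\gcd(p,q)$, and any class $(a,b)$ realizing $ap + bq = \gcd(p,q)$ has $(a,b) \neq (0,0)$; indeed, since $\mu$ is a homomorphism with $\mu(0) = 0$, every loop of positive Maslov index is automatically nontrivial, so this value is attained on a genuine nontrivial loop. If instead $p = q = 0$, then $\mu$ vanishes identically, no loop has positive Maslov index, and the Maslov number is $0 = 2\gcd(0,0)$ by the usual convention. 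In either case the Maslov number equals $2\gcd(w(G_{zx}), w(G_{wy}))$.

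There is no serious obstacle here: the substantive content is carried entirely by Corollary~\ref{cor:main11}, and the remainder is the standard structure theorem for subgroups of $\ZZ$. The only points requiring a moment's care are confirming that the minimizing class is genuinely nontrivial, so that it qualifies under the definition of the Maslov number, and recording the degenerate case $p = q = 0$ so that the stated non-negative formula holds without exception.
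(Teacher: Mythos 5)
Your argument is correct and is essentially the same as the paper's, which simply states that the result ``follows directly'' from Corollary~\ref{cor:main11} and the definition of the Maslov number; you have merely made the implicit B\'ezout/subgroup-of-$\ZZ$ step and the degenerate case $w(G_{zx}) = w(G_{wy}) = 0$ explicit. No issues.
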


\medskip

In \cite{rotation},  Ekholm, Etnyre, and Sullivan compute the classical invariants for Legendrian tori obtained by front-spinning, showing that, in particular, the rotation class of the surface so obtained, is determined by the rotation number of the front projection used in the construction.  Thus, their construction leads to tori with rotation class of the form $(0,r)$.  Not only are we able to construct Legendrian tori in which both factors of the torus are knotted, but we show that Legendrian tori constructed from hypercube diagrams realize every possible pair of integers under the isomorphism defined by $H\Gamma$.  In particular, we get examples where the rotation class is $(0,r)$ in the following theorem by taking one of the knots to be a trivial knot with rotation number zero:

\medskip

\begin{Theorem} 
\label{thm:main2}
Let $(m,k) \in \ZZ^2$, and $K_1$, $K_2$ be any two topological knots in $\RR^3$.  Then there is a hypercube diagram, $H\Gamma = (C, \{\mathcal{W}, \mathcal{X},\mathcal{Y},\mathcal{Z}\}, G_{zx}, G_{wy})$ such that $G_{zx}$ and $G_{wy}$ are Lagrangian grid diagrams representing Legendrian knots in $\RR^3$ with the same topological knot type as $K_1$ and $K_2$.  The Legendrian torus $L$ determined by the lift of the Lagrangian torus determined by  $H\Gamma$ satisfies $r(L) = (m,k)$.  
\end{Theorem}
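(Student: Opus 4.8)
The plan is to reduce the statement to a purely two-dimensional realization problem about Lagrangian grid diagrams, and then solve that problem by local modifications that preserve the topological knot type while adjusting the winding number.

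First I would invoke Theorem~\ref{thm:main1}, which identifies the rotation class of the lift $L$ with the pair of winding numbers $(w(G_{zx}), w(G_{wy}))$. Consequently, to produce a Legendrian torus with $r(L) = (m,k)$ it suffices to build a hypercube diagram whose two grid projections $G_{zx}$ and $G_{wy}$ are Lagrangian grid diagrams representing the topological knot types $K_1$ and $K_2$ and satisfying $w(G_{zx}) = m$ and $w(G_{wy}) = k$. This decouples the problem into two independent one-factor problems, one for each generator $\tilde{\gamma}_{zx}$, $\tilde{\gamma}_{wy}$ of $H_1(L)$.

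Next, for each $i$ I would start with any Lagrangian grid diagram $G_i$ representing a Legendrian knot of topological type $K_i$; such a diagram exists because every topological knot type admits a Legendrian representative and hence a grid-diagram presentation. Let $w_i = w(G_i)$ be its winding number, computed by the corner-counting formula of Theorem~\ref{thm:main1}. The key point is that there is a local combinatorial move on a Lagrangian grid diagram — a grid stabilization inserting a single zig-zag — which leaves the underlying topological knot type unchanged but alters the signed corner count $\#(\text{ccw}) - \#(\text{cw})$ by $\pm 4$, hence changes $w$ by $\pm 1$. Since both the positive and negative versions of this move are available, starting from $w_i$ one can reach any integer, so there is no parity obstruction on $(m,k)$. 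Applying the appropriate stabilization $|m - w_1|$ times to $G_1$ and $|k - w_2|$ times to $G_2$ yields grid diagrams $G_{zx}$ and $G_{wy}$ of the prescribed knot types with $w(G_{zx}) = m$ and $w(G_{wy}) = k$.

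Finally I would feed $G_{zx}$ and $G_{wy}$ into Theorem~\ref{thm:construction} to assemble the hypercube diagram $H\Gamma = (C, \{\mathcal{W},\mathcal{X},\mathcal{Y},\mathcal{Z}\}, G_{zx}, G_{wy})$, first padding the smaller diagram with empty rows and columns (which changes neither knot type nor winding number) so that the two projections share the common grid size the construction requires. Theorems~\ref{thm:torus} and \ref{thm:lift} then produce the immersed Lagrangian torus and its embedded Legendrian lift $L$, and Theorem~\ref{thm:main1} gives $r(L) = (w(G_{zx}), w(G_{wy})) = (m,k)$. The main obstacle is the middle step: one must verify that the stabilization is an admissible move in the hypercube setting, that it changes the corner count by exactly $\pm 4$ while keeping the diagram a valid Lagrangian grid projection, and that combining the two stabilized diagrams does not violate the combinatorial compatibility conditions encoded in $C$ and the partition $\{\mathcal{W},\mathcal{X},\mathcal{Y},\mathcal{Z}\}$.
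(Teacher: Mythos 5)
Your overall strategy coincides with the paper's: realize each knot type with the prescribed rotation number as a Lagrangian grid diagram (the paper's Theorem~\ref{thm:anyRotation}), make the two diagrams compatible, and assemble them via Theorem~\ref{thm:construction}. However, there are two genuine gaps in the middle and final steps. The more serious one is that you never address the \emph{Lagrangian crossing conditions}: the definition of a Lagrangian hypercube diagram requires $|\Delta t(c_i)| \neq |\Delta t(c_j')|$ for every crossing $c_i$ of $G_{zx}$ and $c_j'$ of $G_{wy}$, and without this the lift is only an immersed Legendrian torus, not an embedded one (Remark~\ref{rmk:lagCross}), so the theorem as stated fails. This is not one of the marking conditions ``encoded in $C$'' but a condition on the $t$-coordinates at crossings of the two planar projections, and arranging it is the whole point of Theorem~\ref{thm:expandCrossings} and Corollary~\ref{cor:stabilize}: one diagram is rescaled so that all of its crossing differences $|\Delta t|$ dominate those of the other. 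Your proposal gives no mechanism for this.

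The second gap is the claim that one can ``pad the smaller diagram with empty rows and columns.'' A grid diagram has exactly one marking of each type in every row and column (this is forced by the hypercube marking conditions), so empty rows and columns are not permitted; the paper must instead enlarge a Lagrangian grid diagram by the explicit stabilization of Theorem~\ref{thm:stabilize}, which both fills every row and column and controls the resulting $\Delta t$ values. Relatedly, your $\pm 1$ winding-number move needs justification beyond the corner count: an arbitrary stabilization will generally destroy Condition~\ref{eqn:areaInt} ($\int y\,dw = 0$, needed for the lift to close up) and can violate Condition~\ref{eqn:crossingInt}. The paper handles this by passing through ``almost Lagrangian grid diagrams'' (open Legendrian arcs), attaching a winding-number-changing configuration, and then capping the arc off with area-balancing configurations (Lemmas~\ref{lemma:capArc} and~\ref{lemma:anyRotation}). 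So while your reduction to a two-dimensional realization problem is the right first move, the steps you defer as ``the main obstacle'' are exactly where the real work of the proof lives, and as written they are not resolved.
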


\medskip

Theorem \ref{thm:main2} is a statement about the existence of Lagrangian hypercube diagrams. The methods used in the proof to find Lagrangian hypercube diagrams lead in general to excessively large diagrams.  In practice, however, Lagrangian hypercube diagrams are easy to build by hand.  Knot theory benefited greatly because of the development of nice representations for the knots: braids, knot projections, grid diagrams, etc.  Theorem \ref{thm:main1} and \ref{thm:main2} together can be viewed as our attempt to create similar useful representations of Legendrian tori in $\BR^5$.   In fact, computers can be used to easily generate and compute examples (see Theorem \ref{thm:construction}).

\medskip

This paper stands alone as one of the first papers to explicitly compute classical Legendrian invariants for a large class of knotted Legendrian submanifolds in $\BR^{2n+1}$ for $n\geq 2$ (cf. \cite{rotation}).  We see the potential for much more:  this paper contains key elements in the computing the gradings and dimensions of the moduli spaces used in computing the differential in contact homology.   Our future work will be on how to use the  representations and the calculations in this paper  to  compute the contact homology algorithmically directly from Lagrangian hypercube diagrams.  

\medskip
In fact, we were particularly interested in studying the contact homology of embedded Legendrian tori  in $\RR^5$ (or $S^5$) because of their relationship to Special Lagrangian Cones used to study the String Theory Model in physics.  Briefly, according to this model, our universe is a product of the standard Minkowsky space $\RR^4$ with a Calabi-Yau $3$-fold $X$.  Based upon physical grounds, the SYZ-conjecture of Strominger, Yau, and Zaslov (cf. \cite{SYZ}) expects that this Calabi-Yau 3-fold can be given a fibration by Special Lagrangian $3$-tori with possibly some singular fibers.  To make this idea rigorous one needs control over the singularities, which are not understood well.  One method used to study these singularities (cf. Haskins \cite{Haskins} and Joyce \cite{Joyce}) is to model them locally as special Lagrangian cones $C \subset \CC^3$.  A special Lagrangian cone can be characterized by its associated link $L = C \bigcap S^5$ (the link of the singularity), which turns out to be a minimal Legendrian surface.  When the link type of $L$ is a sphere, then $C$ must be a special Lagrangian plane.  The interesting tractable case appears to be when the link type is an embedded torus.  Several authors (cf. Castro-Urbano \cite{CastroUrbano}, Haskins \cite{Haskins}, Joyce \cite{Joyce}) have shown that there exist infinite families of nontrivial special Lagrangian cones arising from minimal embedded Legendrian tori.  Some work is already being done by Aganagic, Ekholm, Ng, and Vafa \cite{NgSYZ} to understand the connection between contact homology and Lagrangian fillings.  We see this paper as possibly laying groundwork for developing combinatorial tools to understand special Lagrangian cones through the lens of contact homology.    

\medskip

In Section \ref{section:rotation} we present a definition for the rotation class in dimension $5$ and prove that it is characterized by a pair of integers. Section \ref{section:grids} discusses Lagrangian grid diagrams, which enable us to define a Lagrangian hypercube diagram in Section \ref{section:hypercubediagrams}.  In Section \ref{section:torus} we prove that a Lagrangian hypercube diagram represents an immersed Lagrangian torus in dimension 4.  This torus is shown in Section \ref{section:lift} to lift to a Legendrian torus in $\RR^5$ with the standard contact structure.  We then prove Theorem \ref{thm:main1} (Section \ref{section:compute}) and close with a proof of Theorem \ref{thm:main2} and further examples (Section \ref{section:examples}).

\bigskip
\section{Rotation class for embedded Legendrian tori in $\BR^5$}
\label{section:rotation}
\bigskip

In \cite{rotation} the classical Legendrian invariants of Thurston-Bennequin number and rotation number are generalized for $\RR^{2n+1}$.  We recall the definition of rotation class for $\RR^5$ here.  Let $\mathbb{R}^5$ be parametrized using $wxyzt$-coordinates.  Then $\alpha = dt - ydw - xdz$ is a contact $1$-form representing the standard contact structure on $\mathbb{R}^5$.  The contact hyperplanes are given by: 
$$\xi = ker(\alpha) = \{\partial_x, \partial y, \partial_w + y \partial_t, \partial_z + x \partial_t \}.$$

\medskip

Let $f: L \rightarrow (\RR^5, \xi)$ be a Legendrian immersion.  Then the image of $df_x: T_x L \rightarrow T_{f(x)}\RR^5$ is a Lagrangian subspace of the contact hyperplane $\xi_{f(x)}$.  Choose the complex structure $J: \xi_{(w,x,y,z,t)} \rightarrow \xi_{(w,x,y,z,t)}$ such that $J(\partial_w + y \partial_t) = \partial_y$, $J(\partial_y) = -(\partial_w + y \partial_t)$, $J(\partial_z + x \partial_t) = \partial_x$, and $J(\partial_x) = -(\partial_z + x \partial_t)$.  Then the complexification $df_\CC:  TL \otimes \CC \rightarrow \xi$ is a fiberwise bundle isomorphism.  The homotopy class of $(f, df_\CC)$ is called the rotation class of $L$.  Note that the Lagrangian projection $\pi_t:  \RR^5 \rightarrow \CC^4$ gives a complex isomorphism between $(\xi,J)$ and the trivial bundle with fiber $\CC^2$.  Composing $df_\CC$ with $\pi_t$ we get a trivialization $TL \otimes \CC \rightarrow \CC^2$, which we identify with $df_\CC$.  Furthermore, we choose Hermitian metrics on $TL \otimes \CC$ and $\CC^2$ so that $df_\CC$ is unitary.  Thus $f$ gives rise to an element of $U(TL \otimes \CC, \CC^2)$.  The group of continuous maps $C(L,U(2))$ acts freely and transitively on $U(TL \otimes \CC, \CC^2)$ and hence $\pi_0 (U(TL \otimes \CC, \CC^2))$ is in one to one correspondence with $[L,U(2)]$.  From this point forward, we will consider $r(L)$ as an element $[L,U(2)]$.  

\medskip

In general, if $L$ is a genus $g$ Legendrian surface in $\RR^5$, then the rotation class is an element of $[\Sigma_g,U(2)]$.  When $g=0$, $[S^2,U(2)] \cong \pi_2(U(2))$, and hence, the rotation class is always trivial, and uninteresting (for spheres, neither classical invariant yields any useful information).  However, when $g \geq 1$, the rotation class can be nontrivial.  In fact,

\begin{theorem}
\label{isom} The rotation class for a Legendrian torus can be thought of as an element in $\ZZ \times \ZZ$ via the isomorphism $[T,U(2)] \cong \pi_1(U(2)) \times \pi_1(U(2))$.
\end{theorem}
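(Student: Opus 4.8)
The plan is to compute $[T, U(2)]$ directly as a set of homotopy classes, exploiting three standard facts about the target: $U(2)$ is path-connected; $\pi_1(U(2)) \cong \ZZ$ (read off from the determinant fibration $SU(2) \to U(2) \to U(1)$, using that $SU(2) \cong S^3$ is simply connected); and $\pi_2(U(2)) = 0$, since the second homotopy group of any Lie group vanishes. Because $U(2)$ is a topological group, it is in particular a connected $H$-space, so the action of $\pi_1(U(2))$ on the higher homotopy is trivial and the natural map from based to free homotopy classes $[T, U(2)]_* \to [T, U(2)]$ is a bijection. This lets me work with based maps throughout and transport the answer back to the unbased classes that define the rotation class $r(L)$.

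First I would equip $T = S^1 \times S^1$ with its standard CW structure: a single $0$-cell, two $1$-cells $a,b$, and one $2$-cell attached along the commutator word $aba^{-1}b^{-1}$. This exhibits $T$ as the cofiber of the attaching map $\phi\colon S^1 \to S^1 \vee S^1$ representing that commutator, yielding the Puppe sequence
$$S^1 \xrightarrow{\phi} S^1 \vee S^1 \xrightarrow{i} T \xrightarrow{q} S^2 \xrightarrow{\Sigma \phi} S^2 \vee S^2.$$
Applying $[-,U(2)]_*$ gives an exact sequence of pointed sets in which, via the identifications $[S^1, U(2)]_* = \pi_1(U(2))$, $[S^1 \vee S^1, U(2)]_* = \pi_1(U(2))^2$, and $[S^2, U(2)]_* = [S^2 \vee S^2, U(2)]_* = \pi_2(U(2)) = 0$, the relevant portion reads
$$0 \xrightarrow{q^*} [T, U(2)]_* \xrightarrow{i^*} \pi_1(U(2)) \times \pi_1(U(2)) \xrightarrow{\phi^*} \pi_1(U(2)).$$

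The key computation is that $\phi^*$ is trivial: precomposition with the commutator $aba^{-1}b^{-1}$ sends a pair $(\alpha,\beta)$ to $\alpha\beta\alpha^{-1}\beta^{-1}$ in $\pi_1(U(2))$, which vanishes because $\pi_1(U(2)) \cong \ZZ$ is abelian. Since $U(2)$ is an $H$-space, $[T, U(2)]_*$ and $[S^1\vee S^1, U(2)]_*$ carry group structures for which $i^*$ is a homomorphism; exactness at $[T,U(2)]_*$ shows $i^*$ has trivial kernel and hence is injective, while exactness at the middle term together with $\phi^* = 0$ shows $i^*$ is surjective. Thus $i^*$ is an isomorphism $[T, U(2)]_* \cong \pi_1(U(2)) \times \pi_1(U(2))$, and combined with the based/unbased identification this gives $r(L) \in [T, U(2)] \cong \pi_1(U(2)) \times \pi_1(U(2)) \cong \ZZ \times \ZZ$.

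I expect the main obstacle to be the bookkeeping in passing between based and free homotopy classes and in upgrading ``trivial kernel'' to ``injective,'' since both steps rely on the $H$-space structure (triviality of the $\pi_1$-action and the induced group structure on the mapping sets) and must be argued rather than asserted. A more hands-on alternative is pure obstruction theory over the skeleta of $T$: a map on the $1$-skeleton is classified by $\pi_1(U(2))^2$, the single obstruction to extending over the $2$-cell lies in $\pi_1(U(2))$ and equals the vanishing commutator, and the difference obstructions governing uniqueness lie in $\pi_2(U(2)) = 0$. This reproduces the same isomorphism and can serve as a cross-check.
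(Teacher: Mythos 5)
Your proof is correct, and in substance it establishes the same isomorphism the paper does --- restriction to the two generating circles of $T$ is bijective because $U(2)$ is a connected topological group with $\pi_2(U(2))=0$ --- but the packaging is genuinely different. The paper defines the restriction map $H(f)=(f|_a,f|_b)$ directly, proves surjectivity by exhibiting an explicit preimage $(p,q)\mapsto f(p)g(q)$ built from the group multiplication on $U(2)$, and proves injectivity by asserting that ``$U(2)$ is aspherical.'' You instead run the Puppe sequence of the cofibration $S^1\to S^1\vee S^1\to T$, where surjectivity becomes the vanishing of the commutator obstruction $\alpha\beta\alpha^{-1}\beta^{-1}$ (trivial since $\pi_1(U(2))\cong\ZZ$ is abelian) and injectivity becomes exactly $\pi_2(U(2))=0$. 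Your route buys two things the paper elides: it isolates the correct homotopy-theoretic input ($U(2)$ is of course not aspherical, as $\pi_3(U(2))\cong\ZZ$; what the injectivity step actually needs is only $\pi_2=0$, which suffices because $T$ is a $2$-complex), and it explicitly handles the passage between based and free homotopy classes and the upgrade from ``trivial kernel'' to ``injective'' in a sequence of pointed sets --- both of which rest on the $H$-space structure and are silently assumed in the paper. What the paper's hands-on version buys is concreteness: the explicit section $(f,g)\mapsto f\cdot g$ is what the authors later use to identify the two integer coordinates with the winding numbers of the grid projections $G_{zx}$ and $G_{wy}$, so if you adopt the cofiber-sequence formulation you should still record that the inverse isomorphism is realized by $(p,q)\mapsto f(p)g(q)$.
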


\begin{proof}
Given a map of the standard torus, $i: T^2 \rightarrow \RR^5$, let $a = i(1 \times S^1)$ and $b = i(S^1 \times 1)$.  For $\pi_1(U(2))$, choose basepoint $1 \in U(2)$. Define $H: [T,U(2)] \rightarrow \pi_1(U(2))\times\pi_1(U(2))$ to be the map $f \mapsto (f|_{a},f|_{b})$.  $H$ is surjective since $H(fg)(p,q) = (fg|_a(p),fg|_b(q)) = (f(p),g(q))$ for any pair $f,g \in \pi_1(U(2))$.  The $ker(H)$ is the the set of homotopy classes of maps $f: T \rightarrow U(2)$ such that the $f|_{a \bigcup b}$ is nullhomotopic.  Since $U(2)$ is aspherical, any map such that $f|_{a \bigcup b}$ is nullhomotopic must itself be nullhomotopic.  Hence, the kernel is trivial and $H$ is an isomorphism.
\end{proof}


The existence of the isomorphism in Theorem \ref{isom} is, by itself, not useful in general for calculations due to the fact that the isomorphism depends heavily upon the choice of loops on the torus used to define the map: a generic embedding $i:T^2\ra \BR^5$ does not have a preferred basis for homology (one can precompose with any element of $SL(2,\BZ)$ for example).  However,  Lagrangian hypercube diagrams do provide natural, albeit not canonical, choices for these loops as the torus is embedded in $\BR^5$ (cf. $\tilde{\gamma}_{zx}$ and $\tilde{\gamma}_{wy}$ in Theorem~\ref{thm:lift}).  It is these choices together with Theorem~\ref{isom} that allows us to write down our ``preferred'' calculations of rotation class and Maslov index for loops in the embedded Legendrian torus.  The calculations are important to our future work in computing contact homology of knotted Legendrian tori algorithmically.  While all of our calculations  in computing the contact homology from a Lagrangian hypercube diagram will depend upon these choices, the contact homology calculation in the end will not.      

\medskip

Before moving on to the definition of a Lagrangian hypercube diagram, we begin with a discussion of Lagrangian grid diagrams.

\bigskip
\section{Lagrangian Grid Diagrams}
\label{section:grids}
\bigskip

Let $\mathbb{R}^3$ be given $wyt$-coordinates.  Then $\alpha = dt - ydw$ is a contact $1$-form representing the standard contact structure on $\mathbb{R}^3$.  The contact planes are given by:
$$\xi = ker(\alpha) = \{ \partial_y, \partial_w + y\partial_t \}.$$
A Legendrian knot in $(\RR^3,\xi)$ is an embedding $L: S^1 \rightarrow \RR^3$ whose tangent vectors always lie in the contact planes determined by $\xi$.  Let $\theta \mapsto (w(\theta),y(\theta),t(\theta))$ be a parametrization of $L$.  There are two standard projections used to study Legendrian knots, the front projection:
$$\Pi_L := \Pi \circ L : S^1 \rightarrow \RR^2 : \theta \mapsto (w(\theta),t(\theta)),$$
and the Lagrangian projection:
$$\pi_L := \pi \circ L :  S^1 \rightarrow \RR^2: \theta \mapsto (w(\theta),y(\theta)).$$

\medskip

In general, a given knot diagram will not represent the Lagrangian projection of a Legendrian knot.  However, an immersion $\gamma: S^1 \rightarrow \RR^2: \theta \mapsto (w(\theta),y(\theta))$ will correspond to the Lagrangian projection of a Legendrian knot in $(\RR^3,\xi)$  if the following hold:

\begin{equation}
\label{eqn:areaInt}
\int_0^{2\pi} y(\theta) w'(\theta)d\theta = 0
\end{equation}

\begin{equation}
\label{eqn:crossingInt}
\int_{\theta_0}^{\theta_1} y(\theta) w'(\theta)d\theta \neq 0 \text{ whenever } \theta_0 \neq \theta_1 \text{ and } \gamma(\theta_0) = \gamma(\theta_1).
\end{equation}

We now translate \ref{eqn:areaInt} and \ref{eqn:crossingInt} in the context of grid diagrams.  Let $\hat{G}$ be a $wy$-oriented grid diagram (cf. \cite{Bald}).  Grid diagrams have been studied extensively in \cite{Cromwell}, \cite{knotfloer}, \cite{linkfloer}, \cite{NgArc}, \cite{legend}, and consists of an $n\times n$ grid together with a set of markings that, when connected by edges, represent a knot diagram.  Typically one assigns the $y$-parallel segments in $\hat{G}$ to be the over-strands at any crossing.  However, in the following definition we will ignore such crossing conditions, and think of $\hat{G}$ as an immersed $S^1$.  

\begin{definition}
An \emph{immersed grid diagram} is an oriented grid diagram $G$ with no crossing data specified.
\end{definition}

An immersed grid diagram $G$ may be thought of as a mapping $\gamma: S^1 \rightarrow \RR^2: \theta \mapsto (w(\theta),y(\theta))$.  Since $w'(\theta)$ is $0$ along any segment in $G$ parallel to the $y$-axis, and $y(\theta)$ is constant along any segment parallel to the $w$-axis, Condition \ref{eqn:areaInt} translates into 
$$\int_{0}^{2\pi}y(\theta)w'(\theta)d\theta = \sum_{i=1}^n {\sigma(a_i) \cdot y_i \cdot length(a_i)} = 0,$$ 
where $\{a_i\}$ is the collection of segments of $G$ parallel to the $w$-axis, $y_i$ is the $y$-coordinate of $a_i$, and $\sigma(a_i)$ is $+1$ if $a_i$ is oriented left to right and $-1$ otherwise.  Given a crossing in $G$ (i.e. given $\theta_0 < \theta_1$ such that $\gamma(\theta_0) = \gamma(\theta_1)$), Condition \ref{eqn:crossingInt} becomes:
$$\int_{\theta_0}^{\theta_1}y(\theta)w'(\theta)d\theta = \sum_{i=1}^m{\sigma(a_i) \cdot  y_i \cdot length(c_i)} \neq 0,$$
where $\{c_i\}$ is the set of $w$-parallel segments in the loop beginning and ending at the given crossing and such that $\gamma(\theta) \neq \gamma(\theta_0)$ for all $\theta \in (\theta_0,\theta_1)$.  Condition \ref{eqn:areaInt} guarantees that choosing the other loop $(\theta_1, \theta_0) \in \RR/{2\pi\ZZ}$ will give the same integral up to sign as the one chosen.  Therefore any immersed grid diagram $G$ satisfying Conditions (1) and (2) lifts to a piecewise linear Legendrian knot in $(\RR^3,\xi)$ as follows:  choose some $\theta_0 \in S^1$ and define the $t$-coordinate $t_0$ of $\gamma(\theta_0)$ to be $0$.  Then define 

\begin{equation}
\label{eqn:lift}
t_\theta = t_0 + \int_{\theta_0}^{\theta} y(u)w'(u)du.
\end{equation}

Condition \ref{eqn:areaInt} guarantees that in defining the $t$-coordinate this way, the lift will be a closed loop.  Condition \ref{eqn:crossingInt} guarantees that the vertical and horizontal segments at a crossing will have different $t$-coordinates.  

\begin{definition}
A \emph{Lagrangian grid diagram} is an immersed grid diagram $G$ satisfying Conditions \ref{eqn:areaInt} and \ref{eqn:crossingInt}.
\end{definition}

Given a Lagrangian projection of a Legendrian knot $L$, one may compute the rotation number as follows.  Use the vector field $w = \frac{\partial}{\partial y}$ to trivialize $\xi|_L$.  Then the rotation number may be calculated to be the winding number of the tangent vector to $L$ with respect to this trivialization:
$$r(L) = w(\pi_L).$$
For a Lagrangian grid, this is simply a signed count of the corners of $G$.  Let $B$ be the collection of corners in $G$.  Then for a corner $b \in B$ let $\eta(b)$ be a function that assigns a value of $+1$ to any corner of type $W:{NE}$, $Y:{NW}$, $W:{SW}$, and $Y:{SE}$ (i.e. a counterclockwise oriented corner), and a value of $-1$ to any corner of type $W:{NW}$, $Y:{NE}$, $W:{SE}$, and $Y:{SW}$ (i.e. a clockwise oriented corner) following the same notation as in \cite{legend} and \cite{NgThurston}.  Figure \ref{fig:bendtypes} illustrates the types of corners.  Thus we observe that:

\begin{lemma}
\label{lemma:rotation}
Given a Lagrangian grid diagram $G$ with Legendrian lift $L$, the rotation number satisfies:
$$r(L) = w(G) = \frac{1}{4}\sum_{b \in B} \eta(b).$$   
\end{lemma}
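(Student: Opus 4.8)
The plan is to reduce the statement to the computation of the turning number of the piecewise-linear closed curve $\gamma = \pi_L$ determined by $G$, and then to evaluate that turning number by a local contribution at each corner. The first equality $r(L) = w(\pi_L)$ is exactly the content of the discussion preceding the lemma: trivializing $\xi|_L$ by the vector field $\partial_y$ identifies the rotation number with the winding number of the tangent vector of the Lagrangian projection. So the only real work is to establish the combinatorial formula $w(G) = \frac{1}{4}\sum_{b\in B}\eta(b)$.

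First I would recall that the winding number of the tangent vector of a closed immersed planar curve is the degree of its Gauss map $S^1 \to S^1$, $\theta \mapsto \gamma'(\theta)/|\gamma'(\theta)|$, and that this interpretation is valid whether or not $\gamma$ is simple, since $G$ is only immersed. For a piecewise-linear curve this degree is computed as $\frac{1}{2\pi}$ times the sum of the signed exterior (turning) angles at the vertices. Because every segment of $G$ is parallel to either the $w$-axis or the $y$-axis, the tangent direction is constant along each edge and changes only at a corner, where two perpendicular segments meet. Hence each corner $b$ contributes a turning angle of exactly $\pm\pi/2$: the value is $+\pi/2$ when the oriented tangent vector rotates counterclockwise through the corner and $-\pi/2$ when it rotates clockwise.

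Next I would carry out the case analysis identifying the sign of each turn with $\eta(b)$. Using the notation of Figure~\ref{fig:bendtypes}, each corner type records which kind of segment is incoming ($W$ for a $w$-parallel edge, $Y$ for a $y$-parallel edge) together with the compass position of the corner; tracing the oriented tangent vector through each of the eight types shows that the four types $W{:}NE$, $Y{:}NW$, $W{:}SW$, $Y{:}SE$ produce a counterclockwise quarter-turn, while the remaining four produce a clockwise quarter-turn. This is precisely the partition defining $\eta$, so the turning angle at $b$ equals $\frac{\pi}{2}\eta(b)$. Summing over all corners and dividing by $2\pi$ then gives
$$w(G) = \frac{1}{2\pi}\sum_{b\in B}\frac{\pi}{2}\,\eta(b) = \frac{1}{4}\sum_{b\in B}\eta(b),$$
which completes the proof. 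The main obstacle is the bookkeeping in this case analysis: one must be careful that it is the \emph{orientation} of the incoming edge, and not merely the geometric shape of the corner, that fixes the sign, so that each of the eight labeled corner types is assigned the correct turning direction. Once the sign conventions are pinned down against Figure~\ref{fig:bendtypes}, the remaining computation is immediate.
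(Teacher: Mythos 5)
Your proposal is correct and follows essentially the same route as the paper, which states the lemma as an immediate observation after trivializing $\xi|_L$ by $\partial_y$ and identifying $r(L)$ with the winding number of the tangent vector of the Lagrangian projection. Your explicit justification of the factor $\frac{1}{4}$ via the $\pm\frac{\pi}{2}$ turning angle at each corner simply fills in the bookkeeping the paper leaves implicit.
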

 
\begin{figure}[h]
\includegraphics[scale = .4]{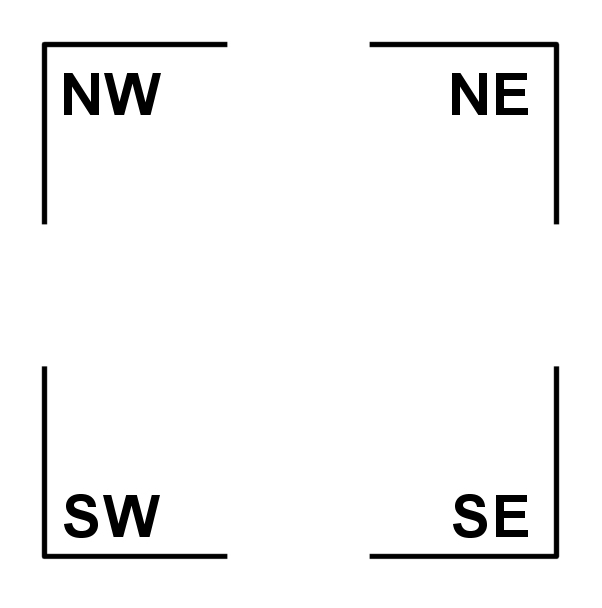}
\caption{Types of corners in a grid diagram.}
\label{fig:bendtypes}
\end{figure}

\begin{figure}[h]
\includegraphics[scale = .8]{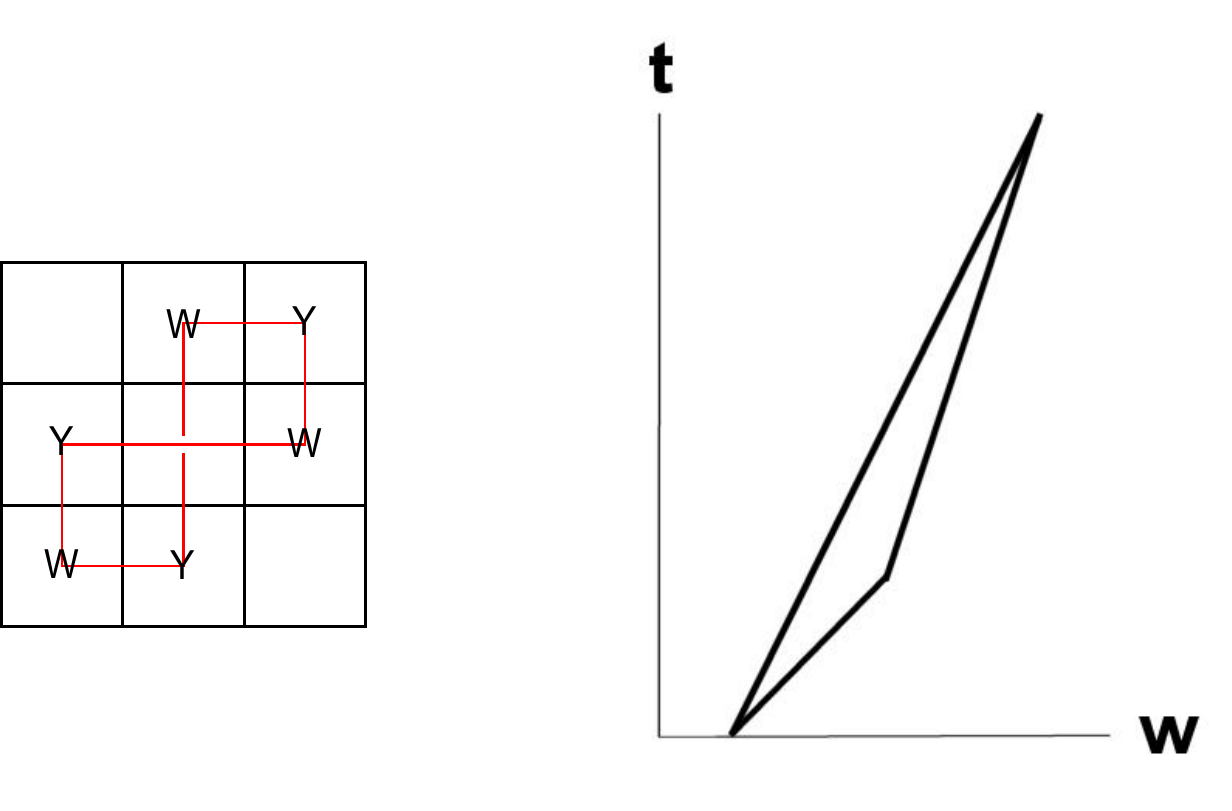}
\caption{A $wy$ immersed grid diagram for the unknot and its corresponding front projection.}
\label{fig:unknot}
\end{figure}

\begin{example}
\label{ex:r0}

Observe that $\int_G ydw = \frac{3}{2} + \frac{7}{2} - 2(\frac{5}{2}) = 0$, and for a path connecting the crossing to itself, $\int_G ydw = \frac{7}{2} - \frac{5}{2} = 1$.  Hence, the unknot shown in Figure \ref{fig:unknot} is a Lagrangian grid.  Set the $t$-coordinate of the $w$-mark in column $1$ to $0$ and define the lift as in Equation \ref{eqn:lift}.  Then the front projection corresponding to the lift of $G$ is shown in Figure \ref{fig:unknot}.  The rotation number is easily computed from this projection since $G$ has $3$ bends that are assigned a value of $+1$ and $3$ that are assigned a value of $-1$.  Hence, $r(G) = 0$.

\end{example}

The Legendrian knots produced using the above method will be piecewise linear, not smooth.  However, we can produce smoothly embedded knots as follows.  Choose $0 < \epsilon << 1$.  Delete an $\epsilon$ neighborhood of each vertex of $G$ and replace it with a smooth curve (cf. Figure \ref{fig:smooth1}).  Such a smoothing may be accomplished so as to guarantee that the diagram is smooth at the boundary of the $\epsilon$ neighborhood as well.  For example, the image of the map
$$E(t) = (w + \epsilon - \epsilon cos(t/\epsilon),y + \epsilon - \epsilon sin(t/\epsilon)).$$
allows one to replace a $W:{SE}$ corner with a smooth arc, but the resulting rounded corner will only be $C^1$ at the boundary of the $\epsilon$ neighborhood.  Note that the smoothing may be done so that the resulting curve is symmetric about the line of slope $\pm1$ through the vertex of the bend. Furthermore, given a choice of a smoothing at a corner such that the area enclosed by the smooth curve and the original bend is $A$, one may obtain a different smoothing so that the area enclosed is $rA$ where $r \in \RR$ such that $0 < r \leq 1$.  

\begin{figure}[h]
\includegraphics[scale = .7]{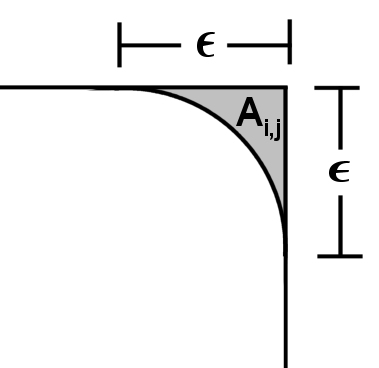}
\caption{A smoothing of a corner.}
\label{fig:smooth1}
\end{figure}

\begin{proposition}
\label{prop:smooth}
Let $\gamma: S^1 \rightarrow \RR^2$ be the piecewise linear immersion determined by the Lagrangian grid diagram, $G$.  There exists a $\delta > 0$ such that for any $0 < \epsilon  \leq \delta$ there is a choice of smoothing curves based upon $\epsilon$ such that the immersion determined by the smoothed grid, $\gamma_\epsilon: S^1 \rightarrow \RR^2$ satisfies the following:
\begin{itemize}
\item the lift of $\gamma_\epsilon$ is $C^0$-close to the lift of $\gamma$, and
\item for any two $\epsilon$, $\epsilon' < \delta$ the Legendrian knots $K$, $K'$ are Legendrian isotopic.
\end{itemize}

\end{proposition}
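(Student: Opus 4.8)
The plan is to use the area-tuning freedom recorded just before the statement to round every corner while preserving both defining Conditions~\ref{eqn:areaInt} and \ref{eqn:crossingInt}, and then to obtain the isotopy of the second bullet from an explicit continuous family of such roundings. First I would fix $\delta>0$ small enough that the closed $\epsilon$-neighborhoods of the vertices of $G$ are pairwise disjoint and meet $\gamma$ only along the two incident segments; in particular they avoid all crossings. For such $\epsilon$ the smoothed curve $\gamma_\epsilon$ agrees with $\gamma$ outside these neighborhoods, so every crossing of $\gamma$ survives and no new crossings appear.

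The crux is the second step: arranging that $\gamma_\epsilon$ again satisfies Condition~\ref{eqn:areaInt}, which is exactly the requirement that its Legendrian lift close up. Replacing a piecewise-linear corner $b$ by its rounding changes $\oint y\,dw$ by the signed area $c_b$ of the small closed loop bounded by the arc and the corner; by Green's theorem and the translation invariance of $\oint y\,dw$ this magnitude is independent of the position of $b$, equal to $(1-\tfrac{\pi}{4})\epsilon^2$ at full radius, with opposite signs for the two corner types, hence governed by $\eta(b)$. Thus with maximal rounding the total defect equals, up to sign, $(1-\tfrac{\pi}{4})\epsilon^2(N_+-N_-)=4(1-\tfrac{\pi}{4})\epsilon^2\,w(G)$ by Lemma~\ref{lemma:rotation}, where $N_\pm$ count the two types. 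When $w(G)=0$ this already vanishes. When $w(G)\neq0$ I would retract the rounding at the $N_{\max}$ majority corners to the fixed fraction $\rho=N_{\min}/N_{\max}\in(0,1]$ via the $rA$-tuning, making the contributions cancel exactly. Here one uses that a Lagrangian grid has corners of both orientations: a grid whose corners all turn the same way is locally convex, so it encloses nonzero signed area and violates Condition~\ref{eqn:areaInt}; hence $N_{\min}\ge1$ and $\rho>0$.

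Shrinking $\delta$ further so that each crossing integral of Condition~\ref{eqn:crossingInt}, perturbed only by $O(\epsilon^2)$, stays bounded away from $0$, the curve $\gamma_\epsilon$ is again a Lagrangian grid and its lift is defined by Equation~\ref{eqn:lift}. Since $\gamma_\epsilon\to\gamma$ uniformly and each partial integral $\int y\,w'\,du$ over a corner neighborhood differs from that of $\gamma$ by $O(\epsilon^2)$, choosing the basepoint value $t_0$ to agree yields a lift that is $C^0$-close to that of $\gamma$, giving the first bullet. For the second bullet I would observe that the chosen rounding fractions ($1$ at minority corners, the constant $\rho$ at majority corners) do not depend on $\epsilon$; only the arc radius does. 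Letting $s$ run from $\epsilon$ to $\epsilon'$ then produces a continuous $1$-parameter family $\gamma_s$ of Lagrangian grids along which the crossings and the over/under data recorded by the $t$-coordinate vary continuously and never degenerate, so the corresponding lifts form a path of Legendrian embeddings from $K$ to $K'$, i.e.\ a Legendrian isotopy. I expect the genuine obstacle to be restoring Condition~\ref{eqn:areaInt} using only the one-sided tuning $0<r\le1$: this is what forces the combinatorial input that every Lagrangian grid carries corners of both orientations, together with the bookkeeping that keeps the choice of fractions continuous in $\epsilon$.
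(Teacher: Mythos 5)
Your argument is correct and follows the paper's proof in essentially the same way: round each corner, observe that the resulting defect in Condition~\ref{eqn:areaInt} is a signed sum of the cut areas with sign governed by the corner orientation $\eta(b)$, use the one-sided tuning $0<r\le 1$ together with the fact that both corner orientations occur to make that signed sum vanish exactly, and note that the crossing integrals of Condition~\ref{eqn:crossingInt} are perturbed by too little to change the crossing data. The only difference is that you supply two justifications the paper leaves implicit --- the locally-convex/enclosed-area argument for why a Lagrangian grid must have corners of both orientations, and the explicit one-parameter family of roundings realizing the Legendrian isotopy of the second bullet, which the paper dismisses as ``clear.''
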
  

\begin{proof}
Choose $\delta > 0$ such that $\delta^2 < \frac{1}{2n}$.  Let $\epsilon < \delta/2$.  Enumerate the corners $b_{i,j} \in B$ so that corner $b_{i,1}$ is the corner on the lefthand side of row $i$ and $b_{i,2}$ is the corner on the righthand side of row $i$.  Let $A_{i,j}$ be the absolute value of the area of the region enclosed by the smoothed arc and the original corner of the corner $b_{i,j} \in B$.  Construct each smoothing so that $|A_{i,j}| \leq \epsilon$.  Denote by $r_i$ the horizontal edge in row $i$.  Then we have the following:
$$\int_{G} ydw = \sum_{i = 1}^{n} {\sigma(r_i) \cdot (i \cdot length(r_i) - \tau_1(i)A_{i,1} - \tau_2(i)A_{i,2})} = -\sum_{i = 1}^{n} {\sigma(r_i) \cdot (\tau_1(i)A_{i,1} + \tau_2(i)A_{i,2})}$$
where $\sigma(r_i)$ is $+1$ if the edge is directed left to right and $-1$ otherwise, $\tau_j(i)$ is $+1$ if the smoothing lies above the horizontal edge, and $-1$ otherwise.

\medskip

Since not all of $\sigma(r_i) \cdot \tau_1(i)$ will evaluate to $+1$ (respectively, all $-1$), we may choose the smoothings so that 
$$\sum_{i = 1}^{n} {\sigma(r_i) \cdot (\tau_1(i)A_{i,1} + \tau_2(i)A_{i,2})} = 0.$$

\medskip

Since the value of the integral in Equation \ref{eqn:crossingInt} may only change from the piecewise linear calculation by an amount less than $\frac{1}{4}$, the smoothed diagram has the same crossing data as the original Lagrangian grid diagram.  The second condition of the Lemma is clear.
\end{proof}

Note if $A_{i,j} = A$ for all $i,j$, the above sum evaluates to $4Ar(G)$.  Thus, if the rotation number is $0$ then the same smoothing may be used for all vertices of $G$.  

\begin{corollary}
\label{cor:smoothing}
Let $\gamma_\epsilon$ be parametrized by $\theta \mapsto (w(\theta),y(\theta))$. Then, 
$$\bigg|\int_{\theta_0}^{\theta_1} y(\theta)w'(\theta)d\theta - \int_{\theta_0}^{\theta_1} y_\epsilon(\theta)w'_\epsilon(\theta)d\theta \bigg| < \frac{1}{4}.$$
\end{corollary}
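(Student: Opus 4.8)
The plan is to localize the difference of the two integrals to the neighborhoods of the corners of $G$, where $\gamma$ and $\gamma_\epsilon$ actually differ, and then to bound the resulting finite sum of enclosed areas. First I would observe that the smoothing of Proposition~\ref{prop:smooth} modifies the piecewise linear immersion $\gamma$ only inside the $\epsilon$-neighborhoods of the vertices of $G$; outside these neighborhoods $\gamma$ and $\gamma_\epsilon$ agree as parametrized curves, so that $y(\theta)=y_\epsilon(\theta)$ and $w'(\theta)=w'_\epsilon(\theta)$ there. Consequently the integrand of the difference is supported on exactly those corner neighborhoods that meet the sub-arc $[\theta_0,\theta_1]$, and the entire difference breaks up as a sum of one contribution per such corner $b_{i,j}$.

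Next I would identify each contribution geometrically. Since $\int y\,dw$ computes signed area (Green's theorem), the contribution at a single corner $b_{i,j}$ is the path integral of $y\,dw$ over the smoothed arc minus that over the original corner, two paths sharing the same endpoints on the boundary of the neighborhood; by Green's theorem this equals $\pm A_{i,j}$, the signed area of the thin region enclosed between the corner and its smoothing arc, with a sign depending only on the orientation of the corner and the side to which the arc is pushed. Thus the quantity to estimate equals $\sum_{b_{i,j}\in[\theta_0,\theta_1]}\pm A_{i,j}$, and the triangle inequality reduces the problem to bounding $\sum A_{i,j}$.

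For the bound I would use that $G$ has exactly $2n$ corners $\{b_{i,j}\}_{i=1,\dots,n,\,j=1,2}$, so the sub-arc passes through at most $2n$ of them, together with the fact that each smoothing is confined to an $\epsilon$-neighborhood, whence $A_{i,j}\le\epsilon^{2}$ (for the quarter-circle model $E(t)$ the enclosed area is $(1-\pi/4)\epsilon^{2}<\epsilon^{2}$, and it may only be decreased by the scaling remark preceding Proposition~\ref{prop:smooth}). With the choices $\epsilon<\delta/2$ and $\delta^{2}<\tfrac{1}{2n}$ recorded in the proof of Proposition~\ref{prop:smooth}, this yields $A_{i,j}<\delta^{2}/4<\tfrac{1}{8n}$, and therefore
$$\Bigg|\int_{\theta_0}^{\theta_1} y(\theta)w'(\theta)\,d\theta-\int_{\theta_0}^{\theta_1} y_\epsilon(\theta)w'_\epsilon(\theta)\,d\theta\Bigg|\le\sum_{b_{i,j}}A_{i,j}<2n\cdot\frac{1}{8n}=\frac{1}{4}.$$

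The main obstacle I anticipate is the careful localization and endpoint bookkeeping rather than the estimate itself. To write the difference as an honest sum of enclosed signed areas I must arrange the parametrizations so that $\gamma$ and $\gamma_\epsilon$ agree at the boundaries of the corner neighborhoods (so that equal values of $\theta$ map to equal points there), and I must ensure that $\theta_0$ and $\theta_1$ do not fall in the interior of a smoothing arc. The latter is automatic in the intended application, since $\theta_0,\theta_1$ are crossing parameters lying on the interiors of grid edges, away from every corner, once $\epsilon$ is taken small enough. Once this bookkeeping is in place, the per-corner sign tracking together with the area bound $A_{i,j}<\tfrac{1}{8n}$ and the count of at most $2n$ corners delivers the inequality immediately.
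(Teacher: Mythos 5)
Your proposal is correct and follows the same route the paper takes: the corollary is not given a separate proof there, but the assertion inside the proof of Proposition~\ref{prop:smooth} that the crossing integrals ``may only change\ldots by an amount less than $\frac14$'' is exactly your estimate, localizing the difference to the at most $2n$ corner neighborhoods and bounding each enclosed area via $A_{i,j}\le\epsilon^{2}<\delta^{2}/4<\tfrac{1}{8n}$. You have simply written out the bookkeeping the authors leave implicit (and, usefully, replaced their stated constraint $|A_{i,j}|\le\epsilon$ by the geometric bound $A_{i,j}\le\epsilon^{2}$, which is what actually makes the arithmetic close).
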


Proposition \ref{prop:smooth} and Corollary \ref{cor:smoothing} show that a Lagrangian grid diagram corresponds to a smoothly embedded Legendrian knot that does not depend on the choice of epsilon used in the smoothing.  Hence we may refer to \emph{the Legendrian knot} corresponding to a Lagrangian grid diagram.

\begin{example}
Since the rotation number of the unknot in Figure \ref{fig:unknot} is $0$ we may choose to smooth all corners in the same way, thus obtaining a Lagrangian projection of a smoothly embedded Legendrian knot in $(\RR^3,\xi)$.  
\end{example}

\begin{figure}[h]
\includegraphics[scale = .8]{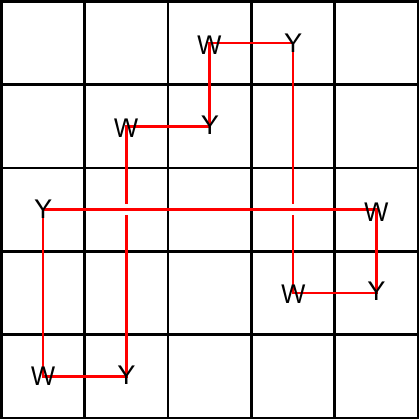}
\caption{An unknot with rotation number $1$.}
\label{fig:unknot2}
\end{figure}

\begin{example}
\label{ex:r1}
The unknot shown in Figure \ref{fig:unknot2} may easily be seen to have rotation number $1$.  In order to smooth the diagram, we perform the following calculation.  To simplify matters choose the smoothings so that the areas satisfy $A_{i,1} = A_{i,2}$, $A_{i} = A_{i,j}$, and all are less than $\frac{1}{100}$.  
$$\sum_{i = 1}^{n} {\sigma(r_i) \cdot (\tau_1(i)A_i + \tau_2(i)A_i)} = 2A_1 + 2A_2 +  2A_3 - A_4 + A_4 - 2A_5$$
Choose the $A_i$ so that $A_1 = A_2 = A_3$ and $A_5 = 3A_1$.  Then this sum will be $0$ and the Lagrangian grid conditions will still be satisfied by the smoothed diagram, and the diagram will be the Lagrangian projection of a smoothly embedded Legendrian knot in $(\RR^3,\xi)$.  
\end{example}

The Legendrian lift of the smoothed Lagrangian grid diagram is unique up to Legendrian isotopy (Proposition \ref{prop:smooth}).  By Corollary \ref{cor:smoothing} we can do integer calculations directly from the Lagrangian grid diagram instead of the smooth $\gamma_\epsilon$ loop, without worrying about changing the crossing information of the lift of the Lagrangian grid diagram.  In particular, there is a correspondence of horizontal edges with opposite orientation in each column that allows one to re-interpret the Lagrangian grid conditions as a signed area sum.  That is:  

\begin{corollary}
\label{cor:areaCalc}
There is a set of rectangles (possibly overlapping) with horizontal edges lying on the knot diagram whose signed areas sum to the same value as the integral in Equation \ref{eqn:lift}.  
\end{corollary}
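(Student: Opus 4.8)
The plan is to reduce the integral in Equation \ref{eqn:lift} to a finite sum over the horizontal edges of $G$ and then reorganize that sum geometrically. First I would note that along any segment of $G$ parallel to the $y$-axis we have $w'(\theta)=0$, so such segments contribute nothing; hence the integral collapses to the signed sum $\sum_i \sigma(a_i)\, y_i\, \mathrm{length}(a_i)$ over the horizontal edges $a_i$ traversed by the path, exactly as in the translation of Condition \ref{eqn:areaInt}. I would then slice each horizontal edge into its unit-width pieces, one for each grid column it covers, so that the total becomes a sum over columns $k$ of the quantity $\sum \sigma(a_i)\, y_i$, where the inner sum runs over the horizontal edges lying over column $k$.

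The geometric heart of the argument is the ``correspondence of horizontal edges with opposite orientation in each column.'' I would establish this by observing that the relevant path is a closed loop: either the entire diagram (the case governed by Condition \ref{eqn:areaInt}), or, at a self-intersection with $\gamma(\theta_0)=\gamma(\theta_1)$, the sub-path $\gamma|_{[\theta_0,\theta_1]}$, which closes up because its endpoints coincide. For such a loop the $w$-coordinate $w(\theta)$ is a periodic function, so for any column value it has equally many upward ($w'>0$) and downward ($w'<0$) crossings. Translated to the diagram, this says that over each column the horizontal edges split into equal numbers of left-to-right and right-to-left edges, and I would pair them off: each such pair, sitting at heights $y_+$ and $y_-$, bounds a unit-width rectangle both of whose horizontal sides lie on the knot diagram, and whose signed area (with the orientation convention making it $y_+-y_-$) equals the pair's contribution $(+y_+)+(-y_-)$ to the column sum. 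Summing the signed areas of all these rectangles over every pair and every column then reproduces the integral; the rectangles may overlap in the plane, but signed areas add without regard to overlap, which is why the statement only claims a ``possibly overlapping'' collection.

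The step I expect to be the main obstacle is precisely the balancing of orientations per column, since it is the only place where the closed-loop hypothesis is essential. An unpaired horizontal edge would force one side of its rectangle onto the axis $y=0$, which is generally not part of the diagram, so the construction genuinely requires that no edge be left over; this is guaranteed exactly when the path is a loop, which is why the corollary is applied to the full loop of Condition \ref{eqn:areaInt} and to the sub-loops at crossings of Condition \ref{eqn:crossingInt}. Once the pairing is in hand, verifying that each rectangle's signed area matches the corresponding term and that the column-by-column totals reassemble to the original integral is a routine bookkeeping check.
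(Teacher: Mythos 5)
Your argument is correct and follows exactly the route the paper intends: the paper offers no formal proof of this corollary, only the preceding remark that ``there is a correspondence of horizontal edges with opposite orientation in each column,'' and your column-by-column pairing of oppositely oriented horizontal edges into rectangles is precisely that correspondence made explicit, including the key observation that the balancing holds because the relevant path (the full diagram for Condition \ref{eqn:areaInt}, or the sub-loop at a self-intersection for Condition \ref{eqn:crossingInt}) is closed. The only detail worth noting is that for the sub-loop at a crossing the path is cut at a half-integer $w$-coordinate, so a couple of your rectangles will have width $\tfrac{1}{2}$ rather than $1$, which does not affect the statement.
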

 
\begin{example}
For the grid diagram in Figure \ref{fig:areaCalc}, we see by computing the signed areas shown that the integral in Equation \ref{eqn:lift} evaluate to $-7$.  Hence, it is not a Lagrangian grid diagram.

\begin{figure}[h]
\includegraphics[scale = 7]{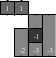}
\caption{Decomposition of grid into rectangles.}
\label{fig:areaCalc}
\end{figure}

\end{example}

In practice, the area calculation described in the previous example may be carried out by simply decomposing the grid into polygonal regions where the top-most horizontal edges are all oriented left (resp. right) and the bottom-most horizontal edges are all oriented right (resp. left).  Then, the signed area of these polygonal regions will correspond to the integrals defined in Conditions \ref{eqn:areaInt} and \ref{eqn:crossingInt}.  For convenience, in the proofs that follow, we will use this signed area calculation to compute the integrals defined in Conditions \ref{eqn:areaInt} and \ref{eqn:crossingInt}.

\begin{theorem}
\label{thm:anyRotation}
Any topological knot type with any rotation number may be realized as a Lagrangian grid diagram.
\end{theorem}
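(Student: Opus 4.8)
The plan is to construct the diagram in three layers: first realize the prescribed topological knot type $K$, then correct the rotation number to the target value $r$, and finally deform the result so that the two Lagrangian conditions \ref{eqn:areaInt} and \ref{eqn:crossingInt} hold. The essential complication to keep in mind throughout is that in a Lagrangian grid diagram the over/under information at a crossing is \emph{not} free data: it is dictated by the relative heights of the two strands in the lift, and by Corollary~\ref{cor:areaCalc} that height difference is exactly the signed area of the rectangles trapped between the strands along the curve. So the knot type, the rotation number, and the area balance are all controlled by one combinatorial quantity (signed areas of subregions), and the real content of the theorem is that there is enough room in a grid to set all three simultaneously.

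First I would start from an ordinary oriented grid diagram $G_K$ presenting $K$, which exists by standard grid-diagram theory (cf. \cite{Cromwell}), using the usual convention that the $y$-parallel (vertical) segments are the over-strands. To make the Legendrian lift of $G_K$ reproduce this convention---and hence have knot type $K$---I would stretch individual rows and columns so as to enlarge the signed areas computed in Corollary~\ref{cor:areaCalc} in the direction that forces each intended over-strand to sit strictly above the strand it crosses. Because stretching a row or column changes the crossing integral \ref{eqn:crossingInt} at a crossing by a predictable amount, one can in principle drive every such integral to have the required sign, at which point the lift is an embedded Legendrian knot of type $K$ (and every crossing integral is automatically nonzero, so Condition \ref{eqn:crossingInt} holds).

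Next I would fix the rotation number. Here the signed corner count of Lemma~\ref{lemma:rotation} is the only relevant invariant, and a Legendrian stabilization performed on a short horizontal strand inside an \emph{empty} region of the grid changes $w(G)=\tfrac14\sum_{b\in B}\eta(b)$ by exactly $\pm1$, with the sign determined by the orientation of the inserted zig-zag, while leaving the knot type untouched (cf. \cite{legend}). Starting from whatever $w(G_K)$ we happen to have and applying $|r-w(G_K)|$ stabilizations of the appropriate sign produces a diagram with knot type $K$ and rotation number exactly $r$. Doing this in empty cells keeps the stabilizations disjoint from the crossings arranged in the previous step, so the knot type is preserved.

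The remaining step, and the one I expect to be the main obstacle, is forcing the total signed area $\int_G y\,dw$ to be exactly zero without disturbing what has already been arranged. For this I would introduce a rotation-neutral balancing move: attach a rectangular detour (``finger'') to a horizontal edge in an empty region of a sufficiently enlarged grid. Such a detour is a planar isotopy, so it preserves $K$; it contributes two clockwise and two counterclockwise corners, so by Lemma~\ref{lemma:rotation} it leaves the rotation number unchanged; and by Corollary~\ref{cor:areaCalc} it changes $\int_G y\,dw$ by its signed area. Adding detours whose signed areas sum to $-\int_G y\,dw$ then yields Condition \ref{eqn:areaInt}, and any crossing integral that has accidentally been driven to zero can be repaired by a further small area-neutral perturbation, since Corollary~\ref{cor:smoothing} bounds the resulting change by $\tfrac14$. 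Finally Proposition~\ref{prop:smooth} upgrades the piecewise-linear diagram to a genuine smooth Lagrangian grid. The delicate point is the interdependence of these adjustments: balancing the area or stabilizing to fix the rotation number must not flip the sign of any crossing integral and thereby change $K$. The way I would make the three steps independent is to carry them out in disjoint empty regions of a grid taken large enough that the moves do not interact---which is presumably also the source of the ``excessively large diagrams'' mentioned after Theorem~\ref{thm:main2}.
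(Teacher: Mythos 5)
Your overall strategy (realize the knot type, then the rotation number, then balance the areas) is reasonable and you correctly identify the central difficulty, but as written the argument has three genuine gaps, all of which come from working with moves that do not stay inside the class of grid diagrams. First, the ``finger'' detour and the row/column stretching both destroy the grid structure: a grid diagram has exactly one horizontal segment per row and one vertical segment per column, so a detour attached to a horizontal edge occupies rows and columns that are already used, and stretching creates empty rows and columns that must be filled back in. The paper spends real effort on exactly this point (the fill configuration of Figure~\ref{fig:gridFill} and the capping configurations of Table~\ref{corners}); your proposal never addresses it. Second, the ``repair by a further small area-neutral perturbation'' step is not an available move: in a grid diagram the integrals in Conditions~\ref{eqn:areaInt} and~\ref{eqn:crossingInt} are integers (Corollary~\ref{cor:areaCalc}), so there are no small perturbations, and Corollary~\ref{cor:smoothing} only bounds the effect of rounding corners --- it is not a tool for nudging a crossing integral off of zero. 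Third, carrying out the balancing detours ``in disjoint empty regions'' does not decouple them from the crossings: the crossing integral at $c$ is taken over the sub-arc between the two preimages of $c$, so a detour inserted anywhere on that sub-arc changes $\Delta t(c)$ by its full signed area regardless of how far away it sits in the plane. Since the detour areas must sum to $-\int_G y\,dw$, which can be large, you have no control over which crossing integrals get pushed to zero.

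The paper's proof is organized precisely to avoid these problems. It starts from Ng's resolution of a front projection (which gets the over/under data right automatically, replacing your stretching step), and then works with an \emph{open} Legendrian arc --- the ``almost Lagrangian grid diagram'' of Definition~\ref{almostLag} --- modifying it only by \emph{appending} grid-compatible configurations at the free end. Appending never changes the integral $\int_{\theta_0}^{\theta_1} y\,w'\,d\theta$ at any existing crossing, because that integral is supported on the old part of the arc; and since each capping configuration in Table~\ref{corners} changes the endpoint $t$-discrepancy by exactly $1$ or $2$, any integer discrepancy can be driven to zero (Lemma~\ref{lemma:capArc}), after which Lemma~\ref{lemma:anyRotation} adjusts the rotation number by the same appending mechanism. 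If you want to salvage your approach, you would need to replace the detours by end-appended, grid-compatible configurations with quantized area contributions --- at which point you have essentially rediscovered the paper's argument.
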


Before proving the theorem, we introduce some definitions and lemmas that we will use only for the proofs in this paper.

\begin{definition}
\label{almostLag}
 An \emph{almost Lagrangian grid diagram} is an immersed grid diagram such that:
\begin{itemize}
 \item the top right corner has a marking,
 \item there is a parametrization $\gamma: I \rightarrow G \subset \RR^2$ in which $\gamma(\theta)$ starts and ends at that marking point.
 \item $\int_{\theta_1}^{\theta^2} y(\theta)w'(\theta)d\theta \neq 0$ whenever $\theta_1,\theta_2 \in (0,1)$, $\theta_1 \neq \theta_2$ and $\gamma(\theta_1) = \gamma(\theta_2)$.
\end{itemize}

\end{definition}

Let the $t$-coordinate of $\gamma(0)$ be $0$.  Then define,
$$t_\theta = \int_{0}^{\theta} y(u)w'(u)du.$$
Thus the last condition of Definition \ref{almostLag} guarantees that an almost Lagrangian grid diagram gives rise to an embedded Legendrian arc. Since the endpoints of this arc project to the top right corner marking and differ only in their $t$-coordinates, an almost Lagrangian grid diagram still gives rise to a knot in $\RR^3$ by attaching the endpoints by a segment parallel to the $t$-axis.  

\begin{lemma}
\label{lemma:capArc}
 An almost Lagrangian grid diagram can always be modified (using configurations listed in Table \ref{corners}) to get a Lagrangian grid diagram with the same topological knot type and winding number as the knot given by the almost Lagrangian grid diagram.  
\end{lemma}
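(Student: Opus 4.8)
The plan is to view the failure of an almost Lagrangian grid diagram to be Lagrangian as a single numerical defect, and then to cancel that defect with local insertions drawn from Table~\ref{corners} that leave the knot type and the winding number fixed. By Definition~\ref{almostLag}, the only Lagrangian grid condition that can fail for an almost Lagrangian grid diagram $G$ is the closure condition~\ref{eqn:areaInt}: the interior crossing condition~\ref{eqn:crossingInt} is built into the definition, so the sole obstruction to being Lagrangian is the total area defect
$$T := \int_0^{2\pi} y(\theta)\,w'(\theta)\,d\theta = \sum_{i}\sigma(a_i)\cdot y_i\cdot length(a_i),$$
which is exactly the difference in $t$-coordinate between the two endpoints of the capping arc. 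The goal is therefore to modify $G$ so that $T$ becomes $0$ while preserving the topological knot type and the winding number $w(G)=\tfrac14\sum_{b\in B}\eta(b)$; once $T=0$ the two endpoints coincide and $G$ closes up into a genuine Lagrangian grid diagram.

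First I would record the two bookkeeping constraints that any admissible modification must respect. By Lemma~\ref{lemma:rotation}, preserving $w(G)$ is equivalent to adding equal numbers of clockwise and counterclockwise corners, i.e.\ contributing $0$ to $\sum_{b}\eta(b)$; preserving the topological knot type requires the modification to be realized by a planar isotopy of the underlying immersed curve. The configurations of Table~\ref{corners} are precisely the local gadgets meeting both constraints: each one inserts a balanced collection of corners and realizes a local isotopy, so inserting any number of them alters neither $w(G)$ nor the knot type.

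Next I would show that these gadgets can change $T$ by a controllable amount of either sign. A balanced detour encloses a signed area, and by placing it in a taller or shorter portion of the grid --- enlarging the grid near the top right marking to make room if necessary --- this contribution can be tuned; since the table supplies pieces of both orientations with a common area quantum, finitely many of them realize a net area change of exactly $-T$. Inserting this collection near the capping arc produces an immersed grid diagram $G'$ with total area integral $T+(-T)=0$, so Condition~\ref{eqn:areaInt} now holds, while $w(G')=w(G)$ and the knot type is unchanged by the previous paragraph.

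The step requiring the most care --- and the main obstacle --- is verifying that the insertions do not spoil the crossing condition~\ref{eqn:crossingInt}. Two things must be checked: that the partial area integral at each pre-existing crossing remains nonzero, and that any new self-intersection introduced by a detour also has nonzero partial area. I would handle this by supporting all insertions in a thin collar near the capping arc, so that only crossings close to that arc are affected, and then using the freedom in where (at which grid heights) to place each gadget: since there are finitely many crossings, only finitely many placements would force some partial area to vanish, and these can be avoided. With Condition~\ref{eqn:crossingInt} maintained and Condition~\ref{eqn:areaInt} achieved, $G'$ is a Lagrangian grid diagram with the same topological knot type and the same winding number as the knot given by $G$, as required.
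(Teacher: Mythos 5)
Your proposal matches the paper's proof: both cancel the $t$-coordinate discrepancy between the endpoints of the Legendrian arc by attaching the configurations of Table~\ref{corners} at the top-right marking, these being exactly the gadgets that preserve topological knot type and winding number, until the arc closes up into a Lagrangian grid diagram. The paper phrases this as an iteration that reduces the discrepancy by $1$ or $2$ at each step, with each intermediate stage remaining an almost Lagrangian grid diagram (which is how it disposes of the crossing condition that you handle by a placement/genericity argument), but the underlying argument is the same as yours.
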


\begin{proof}
An almost Lagrangian grid diagram represents a Legendrian arc whose endpoints have $t$-coordinates that differ by some $k \in \ZZ$.  Attach one of the configurations shown in Table \ref{corners}.  Each time such a configuration is attached, the resulting grid will again be an almost Lagrangian grid diagram, but the difference between the end points of the new Legendrian arc will be reduced by $1$ or $2$. Continue reducing this difference until the arc closes up to give a Lagrangian grid diagram.
\end{proof}

\begin{table}[h]
\begin{tabular}{c}
\includegraphics[scale = 1]{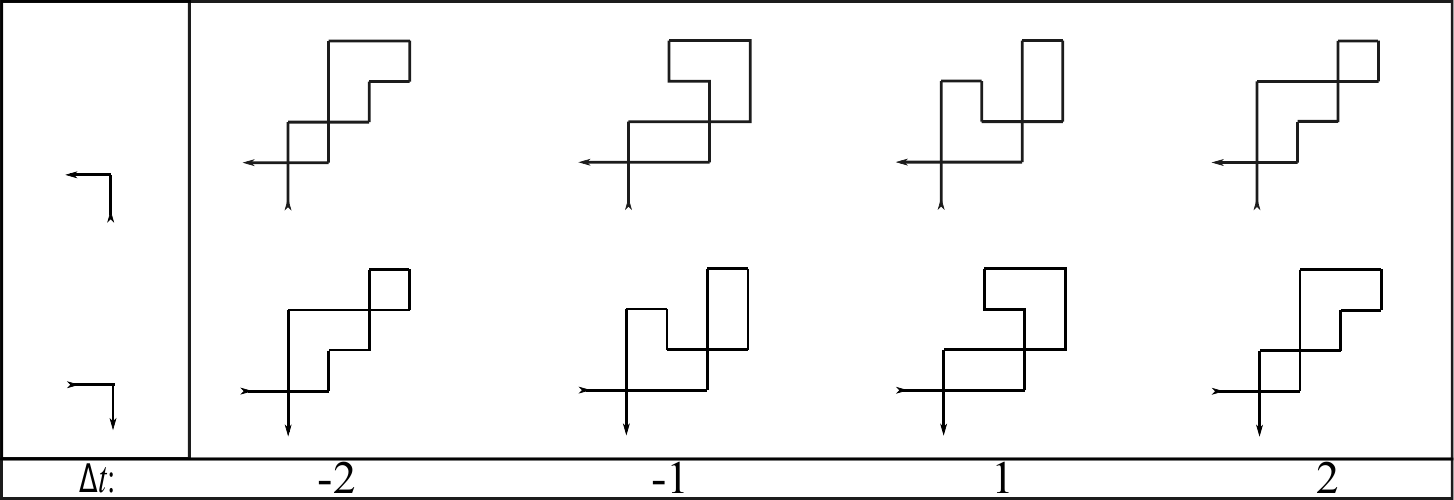}
\end{tabular}
\caption{Configurations used to convert an almost Lagrangian grid diagram into a Lagrangian grid diagram.  The value of $\Delta t$ follows from Corollary \ref{cor:areaCalc}.}
\label{corners}
\end{table}

\begin{lemma}
\label{lemma:anyRotation}
Let $k \in \ZZ$.  Any Lagrangian grid diagram can be modified to obtain a Lagrangian grid diagram with rotation number $k$.
\end{lemma}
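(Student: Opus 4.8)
The plan is to realize the required change in rotation number by repeatedly performing a local modification that raises or lowers the winding number by exactly one while leaving the topological knot type unchanged, and then to check that each such modification stays within the class of Lagrangian grid diagrams. Since Lemma \ref{lemma:rotation} identifies the rotation number with the normalized signed corner count $w(G) = \frac14\sum_{b\in B}\eta(b)$, changing the rotation number by $m\in\ZZ$ amounts to inserting configurations whose net contribution to $\sum_{b}\eta(b)$ is $4m$.

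Concretely, I would take the modification to be a Legendrian connected sum with a standard stabilized unknot. The paper already exhibits explicit Lagrangian grid diagrams for the unknot with rotation number $0$ (Figure \ref{fig:unknot}) and rotation number $1$ (Figure \ref{fig:unknot2}); reflecting the latter produces a Lagrangian grid unknot with rotation number $-1$. Writing $r_0 = w(G)$ for the given diagram, I form the connected sum of $G$ with $|k-r_0|$ copies of the rotation $+1$ unknot when $k\geq r_0$, and with $|k-r_0|$ copies of the rotation $-1$ unknot when $k<r_0$. Because rotation numbers add under Legendrian connected sum, while connect-summing with an unknot leaves the topological knot type unchanged, the resulting diagram represents the same knot and has rotation number exactly $k$; equivalently, each summand adds $\pm 4$ to the corner count and shifts $w$ by $\pm 1$ as required.

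First I would fix a combinatorial model for the connected sum of two Lagrangian grid diagrams: place the second grid in a fresh block of rows and columns adjacent to a chosen marking of $G$ and splice the two curves along a thin rectilinear neck consisting of two antiparallel strands. The two neck strands are traversed in opposite directions along edges of equal length at equal heights, so their contributions to $\int y\,dw$ cancel, and the total area integral of the spliced curve equals the sum of the (vanishing) integrals of the two summands; thus Condition \ref{eqn:areaInt} is preserved. Since each summand is itself Lagrangian, every preexisting self-crossing still has a nonzero crossing integral, and the only new crossings are those inside the inserted unknot, whose crossing integrals are nonzero because it is a Lagrangian grid; the neck can be drawn disjointly so that it introduces none. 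Hence Condition \ref{eqn:crossingInt} also survives, and the spliced diagram is again a Lagrangian grid diagram.

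The main obstacle is precisely this last bookkeeping: guaranteeing simultaneously that the glued curve still satisfies the rigid equality $\int y\,dw=0$ and that no crossing integral is pushed to zero, given that the winding-changing insertion necessarily redistributes signed area. Here I would lean on the area-balancing techniques already developed: the signed-rectangle computation of Corollary \ref{cor:areaCalc} to track $\int y\,dw$ exactly, the freedom in Proposition \ref{prop:smooth} to absorb discrepancies smaller than $\frac14$ without altering any crossing data, and, should an integer correction be needed, the corner configurations of Table \ref{corners} (which by Lemma \ref{lemma:capArc} adjust the area balance without changing the winding number). Iterating the modification $|k-r_0|$ times then yields a Lagrangian grid diagram of the same knot type with rotation number $k$.
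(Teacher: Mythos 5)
Your strategy --- realizing the change in rotation number by connect-summing with Lagrangian grid unknots of rotation number $\pm 1$ --- is genuinely different from the paper's, which never works with a closed diagram during the modification. The paper first cuts the diagram open at a top-right corner marking (creating one by a stabilization and a commutation if necessary), so that it becomes an \emph{almost Lagrangian grid diagram} in the sense of Definition \ref{almostLag}: an embedded Legendrian \emph{arc} for which only the crossing integrals must be nonzero, while the global constraint $\int_G y\,dw = 0$ is suspended. It then attaches an explicit winding-changing configuration (Figure \ref{fig:rotationChange}) at the free corner, and only at the very end restores the area balance by capping off with the configurations of Table \ref{corners} via Lemma \ref{lemma:capArc}, each of which reduces the endpoint $t$-discrepancy by $1$ or $2$ without changing the winding number. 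The whole point of that architecture is to decouple the winding-number change from the rigid equality $\int y\,dw = 0$.

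Your proposal instead keeps the diagram closed throughout, and the step where you verify Condition \ref{eqn:areaInt} has a genuine gap. You assert that the two antiparallel neck strands are ``at equal heights,'' so that their contributions to $\int y\,dw$ cancel. In a grid diagram this is impossible: every horizontal segment occupies its own row, so the two strands sit at distinct heights $y_1 \neq y_2$, and their net contribution is $\pm(y_1 - y_2)\,\ell$, the signed area of the neck rectangle --- a nonzero multiple of the neck length $\ell$, which is at least the distance from $G$ to the inserted unknot block. None of the tools you cite closes this: Proposition \ref{prop:smooth} only absorbs discrepancies smaller than $\tfrac14$, and the configurations of Table \ref{corners} are designed to be attached at the free endpoint of an \emph{open} arc with a top-right corner marking, not spliced into a closed diagram. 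To repair the argument you would either have to co-locate and size the neck and the summand so that the neck area cancels against an adjustment elsewhere (in effect re-proving Lemma \ref{lemma:capArc}), or cut the diagram open first --- at which point you have essentially rediscovered the paper's proof. A secondary point: additivity of the rotation number under Legendrian connected sum is a statement about Legendrian knots, and you would still need to check that your combinatorial splice of Lagrangian projections realizes a Legendrian connected sum of the lifts; the purely combinatorial corner count $w(G_1 \# G_2) = w(G_1) + w(G_2)$ from Lemma \ref{lemma:rotation} is easier and is all you actually need.
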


\begin{proof}

Let $k \in \ZZ$.  If the Lagrangian grid diagram does not have a marking in the top right corner, modify it so that does by stabilizing in the righthand column and commuting the horizontal edge of length $1$ to the top of the grid, to obtain an almost Lagrangian grid diagram.  Then, at this top right corner, attach one of the configurations shown in Figure \ref{fig:rotationChange} to change the rotation number to $k$.  This new object is an almost Lagrangian grid diagram.  Apply Lemma \ref{lemma:capArc} to obtain a Lagrangian grid diagram whose lift has the same topological knot type as the original Lagrangian grid diagram.
\end{proof}

\begin{figure}[h]
\includegraphics[scale=.7]{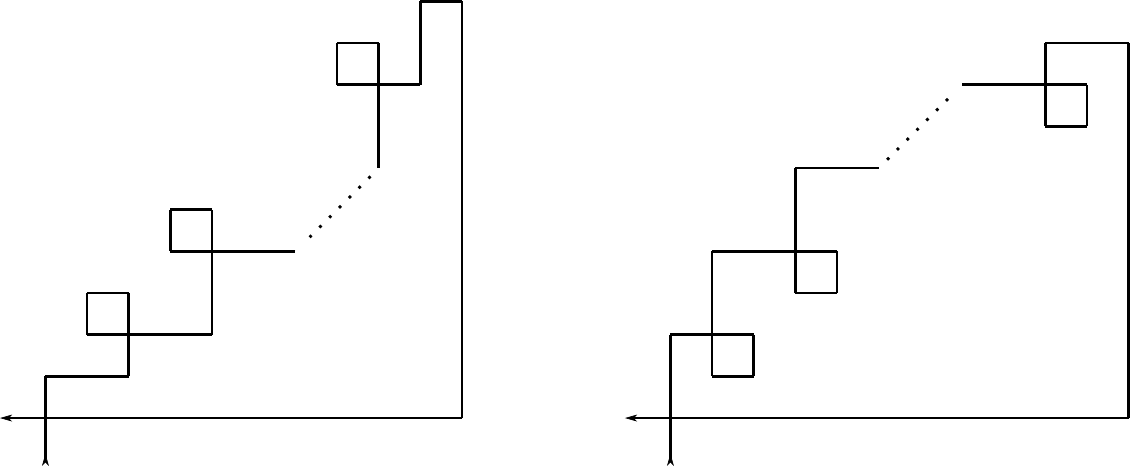}
\caption{Configuration to change the winding number of an immersed grid diagram.}
\label{fig:rotationChange}
\end{figure}

We now proceed with the proof of Theorem \ref{thm:anyRotation}.

\begin{proof}
We use Lenhard Ng's arguments, \cite{Ng}, as a guide to construct Lagrangian grid diagrams.  Recall that a grid diagram (in the usual sense) may be thought of as a front projection of a Legendrian knot.   Given such a front projection, we may resolve the front to obtain the Lagrangian projection of a knot isotopic to the one determined by the front.  This Lagrangian projection will have the same crossing data as the original grid, and, as a diagram, is isotopic to the original grid after adding loops at each southeast corner.  

We follow a similar procedure, but modify it so that we obtain a Lagrangian grid diagram.  Given a grid diagram (in the usual sense), stabilize at each southeast corner (without adding a crossing), and commute the horizontal edge of length $1$ to the bottom of the grid to obtain a simple front (cf. \cite{Ng}).  By applying another stabilization in the right-most column, and then commutation moves, we may ensure that this grid has a marking in the top right corner.  Then add a loop at each southeast corner, as is done in constructing the front resolution.  By possibly inserting some number of empty rows and columns, we may adjust the enclosed areas so that we obtain a diagram whose lift represents the same knot in $\RR^3$ as the grid diagram we started with.  This diagram will, in general, not be a grid diagram, since it contains empty rows and columns.  At the top right corner, attach a configuration as shown in Figure \ref{fig:gridFill} to fill in any empty rows and columns, and thus obtain an almost Lagrangian grid diagram.  Then, by applying Lemmas \ref{lemma:capArc} and \ref{lemma:anyRotation}, we may obtain a Lagrangian grid diagram representing the same topological knot type as the original grid diagram, and having any rotation number $k$.
\end{proof}

\begin{figure}[h]
\includegraphics[scale = 1]{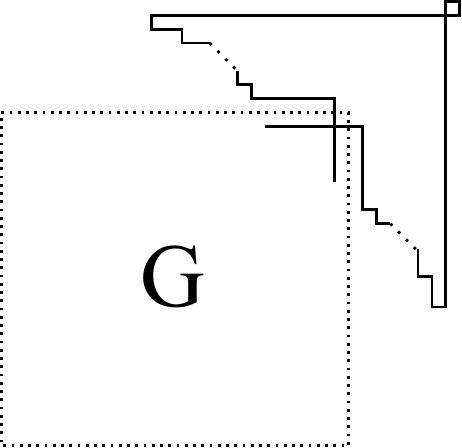}
\caption{Filling in empty rows and columns.}
\label{fig:gridFill}
\end{figure}

\bigskip
\section{Lagrangian hypercube diagrams in dimension 4}
\label{section:hypercubediagrams}
\bigskip

The definition of a Lagrangian hypercube diagram codifies a data structure that mimics  that of hypercube diagrams, cube diagrams and grid diagrams.  While the definition appears similar to that of $4$-dimensional hypercube diagrams as defined in \cite{Bald}, they are not equivalent.  Let $n$ be a positive integer and  let the hypercube $C = [0,n]\times [0,n]\times [0,n] \times [0,n] \subset \mathbb{R}^4$ be thought of as a $4$-dimensional Cartesian grid, i.e., a grid with integer valued vertices with axes $w$, $x$, $y$, and $z$.  Orient $\BR^4$ with the orientation $w\wedge x\wedge y\wedge z$.

\medskip

 A \textit{flat} is any right rectangular $4$-dimensional prism with integer valued vertices in the hypercube such that there are two orthogonal edges at a vertex of length $n$ and the remaining two  orthogonal edges are of length $1$.  Name flats by the axes parallel to the two orthogonal edges of length $n$.  For example, a $yz$-flat is a flat that has a face that is an $n\times n$ square that is parallel to the $yz$-plane.

\medskip

Similarly, a {\em cube} is any right rectangular $4$-dimensional prism with integer vertices in the hypercube such that there are three orthogonal edges of length $n$ at a vertex with the remaining orthogonal edge of  length $1$.  Name cubes by the three  edges of the cube of length $n$.  See Figure~\ref{cubesandflats} for examples.

\medskip

A marking is a labeled point in $\BR^4$ with half-integer coordinates.  Mark unit hypercubes in the $4$-dimensional Cartesian grid with either a $W$, $X$, $Y$, or $Z$ such that the following {\em marking conditions} hold:

\begin{itemize}
 \item each cube has exactly one $W$, one $X$, one $Y$, and one $Z$ marking;\\

 \item each cube has exactly two flats containing exactly 3 markings in each; \\

 \item for each flat containing exactly 3 markings, the markings in that flat form a right angle such that each ray is parallel to a coordinate axis;\\

    \item for each flat containing exactly 3 markings, the marking that is the vertex of the right angle is $W$ if and only if the flat is a $zw$-flat, $X$ if and only if the flat is a $wx$-flat, $Y$ if and only if the flat is a $xy$-flat, and $Z$ if and only if the flat is a $yz$-flat.

\end{itemize}

\begin{figure}[h]
\includegraphics[scale=.5]{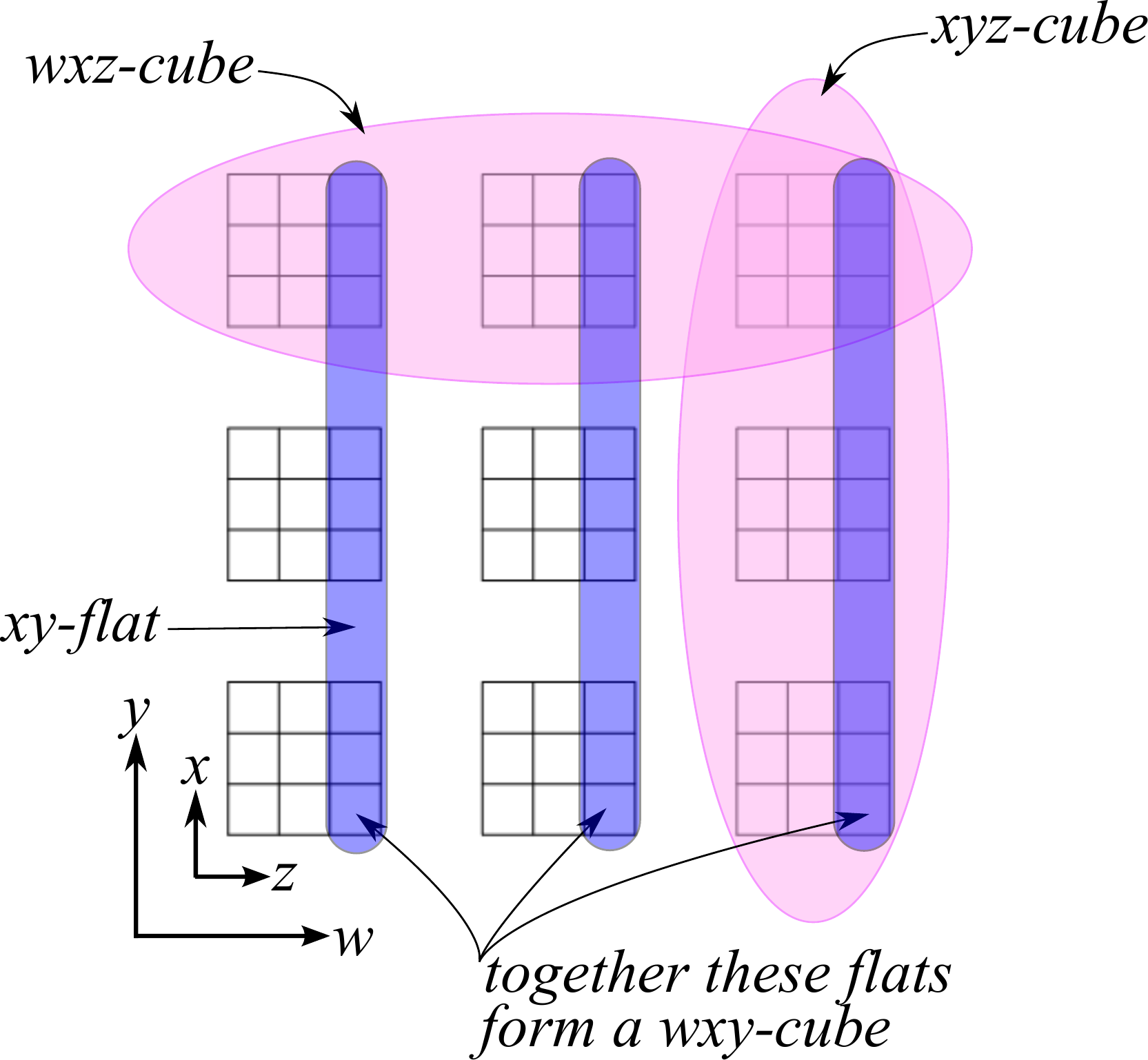}
\caption{\small \it  A schematic for displaying a Lagrangian hypercube diagram.  The outer $w$ and $y$ coordinates indicate the ``level'' of each $zx$-flat.  The inner $z$ and $x$ coordinates start at $(0,0)$ for each of the nine $yz$-flats. With these conventions understood, it is then easy to display $xy$-flats, $xyz$-cubes, $wxz$-cubes, $wxy$-cubes, etc. } \label{cubesandflats}
\end{figure}

\medskip

The 4th condition rules out the possibility of either $wy$-flats or a $zx$-flats with three markings.  As with oriented grid diagrams and cube diagrams, we obtain an oriented link from the markings by connecting each $W$ marking to an $X$ marking by a segment parallel to the $w$-axis, each $X$ marking to a $W$ marking by a segment parallel to the $x$-axis, and so on.
   
\medskip

Let $\pi_{xz}, \pi_{wy} : \RR^4 \rightarrow \RR^2$ be the natural projections.  Define $G_{wy} := \pi_{xz}(C)$ and $G_{zx} := \pi_{wy}(C)$ which are immersed grid diagrams.  Let $\{c_i\}$ be the crossings in $G_{zx}$, and $\{c_i'\}$ be the crossings in $G_{wy}$.  Then we say that the \emph{Lagrangian crossing conditions} hold for the pair $G_{zx}$ and $G_{wy}$ if $|\Delta t(c_i)| \neq |\Delta t(c_i')|$ $\forall i,j$ where $\Delta t$ is the difference in the $t$-coordinates at each crossing determined by Equation \ref{eqn:crossingInt}.  

\medskip

\begin{definition}
If the markings $\{\mathcal{W}, \mathcal{X},\mathcal{Y},\mathcal{Z}\}$ in $C$ satisfy the marking conditions, and the immersed grid diagrams $G_{wy}$ and $G_{zx}$ are Lagrangian grid diagrams satisfying the Lagrangian crossing conditions, then we define $H\Gamma = (C, \{\mathcal{W}, \mathcal{X},\mathcal{Y},\mathcal{Z}\}, G_{zx}, G_{wy})$ to be a \emph{Lagrangian hypercube diagram}.  
\end{definition}

\bigskip
\section{Building a torus from a Lagrangian hypercube diagram}
\label{section:torus}
\bigskip

A hypercube schematic (cf. Figure \ref{fig:unknot3}) conveniently displays the markings of a Lagrangian hypercube diagram so that the Lagrangian grid diagrams $G_{zx}$ and $G_{wy}$ may be read off of the diagrams directly.  To see $G_{wy}$ treat each $n \times n$ $zx$-flat as a cell of $G_{wy}$ (i.e. consider the projection $\pi_x \circ \pi_z$).  Each $zx$-flat containing a $W$ and $Z$ marking will project to a cell of $G_{wy}$ containing a $W$ marking and each $zx$-flat containing an $X$ and $Y$ marking will project to a cell of $G_{wy}$ containing a $Y$ marking.  In Figure \ref{fig:unknot3}, the blue shading indicates the diagram associated to $G_{wy}$.  To see $G_{zx}$ in the schematic, note that each pair of markings in a $zx$-flat on the schematic corresponds to an edge of the Lagrangian grid diagram $G_{zx}$.  Placing these segments on a single $n \times n$ grid will produce a copy of $G_{zx}$.

\medskip

To produce an immersed torus from the Lagrangian hypercube diagram, place a copy of the immersed grid $G_{zx}$ at each $zx$-flat on the schematic that contains a pair of markings (shown in red on Figure \ref{fig:unknot3}).  Doing so produces a schematic with two copies of $G_{zx}$ with the same $y$-coordinates and two with the same $w$-coordinates.  For each pair of copies sharing the same $w$-coordiantes, we may translate one parallel to the $w$-axis toward the other.  Doing so traces out an immersed tube connecting these two copies of $G_{zx}$.  Similarly, we may translate parallel to the $y$-axis to produce an immersed tube connecting two copies of $G_{zx}$ with the same $y$-coordinates.  Since we are connecting copies of $G_{zx}$ in flats corresponding to the markings of $G_{wy}$, the tube will close to produce an immersed torus.  Thus we obtain:

\begin{theorem}
\label{thm:torus}
A Lagrangian hypercube diagram determines an immersed Lagrangian torus $i: T \rightarrow \RR^4$.  Furthermore, the map determines a preferred set of loops, $\gamma_{zx} = S^1 \times 1$ and $\gamma_{wy} = 1 \times S^1$, that map to curves projecting to the Lagrangian grid diagrams $G_{zx}$ and $G_{wy}$.
\end{theorem}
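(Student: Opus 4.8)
The plan is to bypass the informal ``tube'' description and instead write the immersion down explicitly as a product of the two Lagrangian grid curves, and then verify directly that it is an immersion, that it is Lagrangian, and that its coordinate circles are the advertised preferred loops. First I would apply Proposition~\ref{prop:smooth} to each of $G_{zx}$ and $G_{wy}$ to obtain smooth immersions $\ga_{zx}\colon S^1 \to \RR^2$, $\theta \mapsto (z(\theta), x(\theta))$, and $\ga_{wy}\colon S^1 \to \RR^2$, $\phi \mapsto (w(\phi), y(\phi))$, of the two Lagrangian grid diagrams, and then define $i\colon T = S^1 \times S^1 \to \RR^4$ by
$$i(\theta, \phi) = (w(\phi),\, x(\theta),\, y(\phi),\, z(\theta)).$$
The content of this step is to argue that this product map is exactly the locus produced by the construction of Section~\ref{section:torus}: placing a copy of $\ga_{zx}$ over each marking of $G_{wy}$ and then translating it along the $w$- and $y$-parallel edges of $G_{wy}$ sweeps out precisely $\{i(\theta,\phi)\}$, since translation along an edge of $G_{wy}$ changes only the $(w,y)$-coordinates and leaves the carried copy of $\ga_{zx}$ rigid in the $(z,x)$-coordinates.

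Next I would check that $i$ is a Lagrangian immersion. The Jacobian $di$ has columns $\partial_\theta i = (0, x'(\theta), 0, z'(\theta))$ and $\partial_\phi i = (w'(\phi), 0, y'(\phi), 0)$; the first lies in the $xz$-coordinate plane and the second in the $wy$-coordinate plane, and each is nonzero because $\ga_{zx}$ and $\ga_{wy}$ are immersions, so the two columns are independent and $i$ is an immersion. For the Lagrangian condition I would recall that the symplectic form induced on $\RR^4$ is $\om = dw \wedge dy + dz \wedge dx$ (from $d\alpha$, with $\alpha = dt - y\,dw - x\,dz$), and compute
$$i^*\om = i^*(dw\wedge dy) + i^*(dz\wedge dx) = w'(\phi)y'(\phi)\, d\phi\wedge d\phi + z'(\theta)x'(\theta)\, d\theta\wedge d\theta = 0,$$
so the image is Lagrangian. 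Since the domain is $S^1\times S^1$, this exhibits an immersed Lagrangian torus.

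Finally I would identify the preferred loops. Taking $\ga_{zx} = S^1 \times \{1\}$ gives $\theta \mapsto (w(1), x(\theta), y(1), z(\theta))$, whose image under $\pi_{wy}$ is the curve $(z(\theta), x(\theta))$, i.e.\ $G_{zx}$; taking $\ga_{wy} = \{1\} \times S^1$ gives $\phi \mapsto (w(\phi), x(1), y(\phi), z(1))$, whose image under $\pi_{xz}$ is $(w(\phi), y(\phi))$, i.e.\ $G_{wy}$. This establishes the second assertion.

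The step I expect to be the main obstacle is the first: making rigorous the claim that the construction of Section~\ref{section:torus} really does reassemble into the single product torus above, rather than into several components or into a surface glued with a twist. This is exactly where the marking conditions enter. They guarantee that $G_{zx}$ and $G_{wy}$ are genuine closed immersed grid diagrams (single loops), that every $zx$-flat carrying a pair of markings sits over a marking of $G_{wy}$ and contributes a correspondingly-placed edge of the \emph{same} curve $\ga_{zx}$, and that the copies carried along adjacent edges of $G_{wy}$ agree where those edges meet at a corner, so that the swept surface closes up consistently. Once this combinatorial bookkeeping is in place, the analytic verifications above are immediate.
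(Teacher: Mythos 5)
Your proposal is correct and follows essentially the same route as the paper: the paper's proof is the tube construction itself, sweeping a copy of $G_{zx}$ along the edges of $G_{wy}$, which is exactly your product map $i(\theta,\phi)=(w(\phi),x(\theta),y(\phi),z(\theta))$, and the Lagrangian property is verified there by noting that only $wx$, $wz$, $yz$, and $xy$ rectangles occur in the sweep, which is the combinatorial shadow of your computation $i^*\omega=0$. Your explicit formula and pullback calculation simply make rigorous what the paper states verbally; no genuinely different idea is involved.
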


Since the torus is formed by the translation of $x$ and $z$-parallel segments to the $w$ and $y$ axes, we see that only $wx$, $wz$, $yz$, and $xy$ rectangles are used in the construction of the torus.  Since $wy$ and $zx$ rectangles are never used in the construction of the torus, it is Lagrangian with respect to the symplectic form $dw \wedge dy + dz \wedge dx$.  Furthermore, just as in the case of Lagrangian grid diagrams, we obtained a smooth embedding by carefully smoothing corners, we may obtain a smooth embedding of the torus in $\RR^5$ by first smoothing $G_{zx}$ and $G_{wy}$ as in Lemma \ref{prop:smooth}.

\medskip

\begin{figure}[h]
\includegraphics[scale=.6]{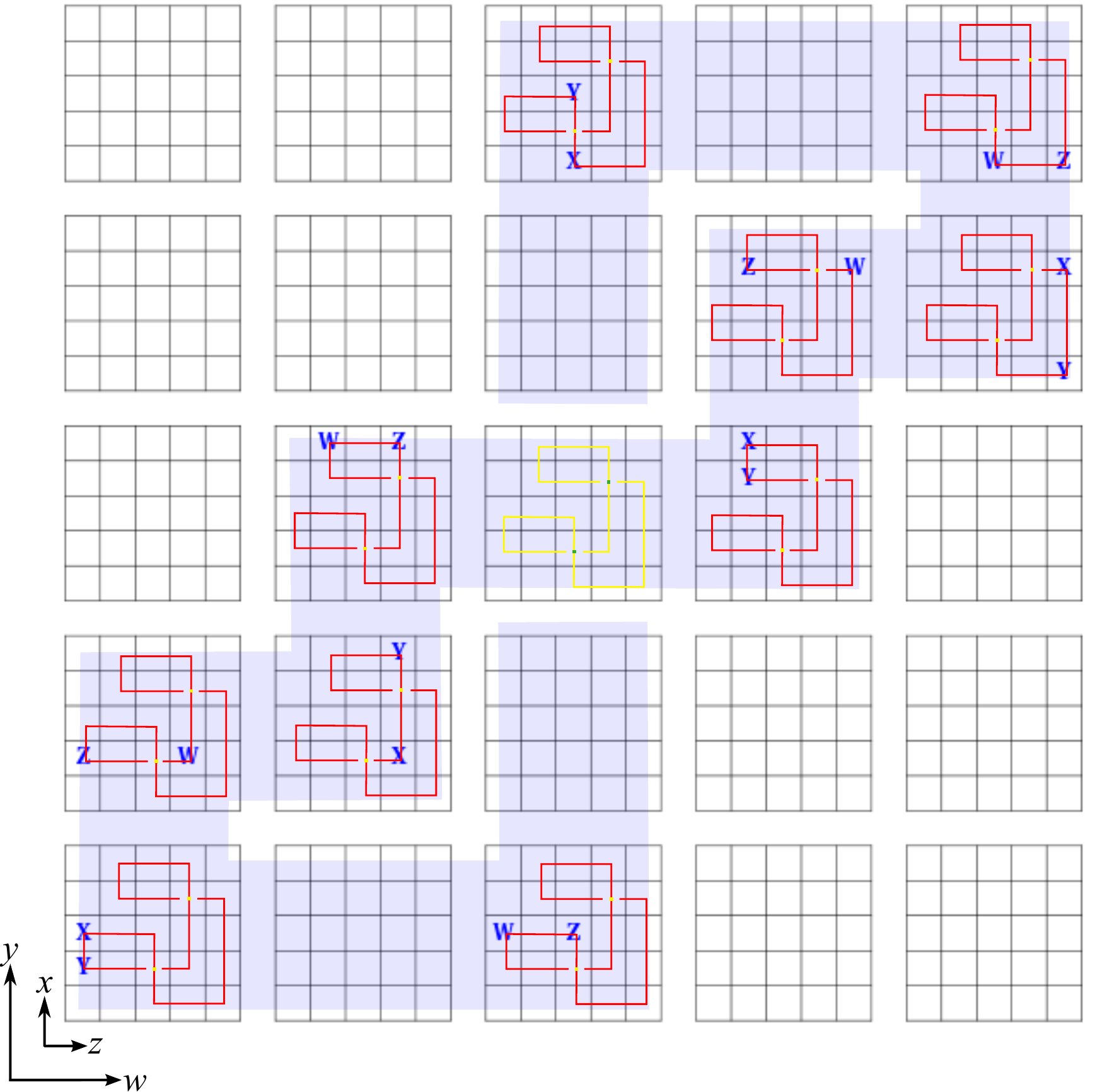}
\caption{Lagrangian hypercube diagram with unknotted $G_{zx}$ and $G_{wy}$ and rotation class $(1,0)$.}
\label{fig:unknot3}
\end{figure}

Furthermore, the torus has only two types of singularities:  double point circles and intersections of double point circles.  Each crossing of $G_{zx}$ generates a double point circle as shown by the yellow dots in Figure \ref{fig:unknot3}.  Similarly each crossing of $G_{wy}$ generates a double point circle, which is visible in the schematic as the $zx$-flat where a $w$-parallel tube passes through a $y$-parallel tube.  In Figure \ref{fig:unknot3} this is shown by the yellow diagram. The green dot in Figure \ref{fig:unknot3} corresponds to an intersection of two double point circles.

\bigskip
\section{Lifting the hypercube to $\mathbb{R}^5$}
\label{section:lift}
\bigskip

Let $i: T \rightarrow \mathbb{R}^4$ be the immersed torus obtained from a Lagrangian hypercube diagram as given by Theorem \ref{thm:torus}.  Note that, $d\alpha |_{wxyz-hyperplane} = \omega = dw \wedge dy + dz \wedge dx$ is a symplectic form on $\mathbb{R}^5$.  We will show that $H\Gamma$ represents the Lagrangian projection of a Legendrian surface in $\mathbb{R}^5$ with respect to the standard contact structure $\xi$.

\medskip

In order to lift $i(T)$ we begin by choosing some point $p \in i(T)$ to have $t$ coordinate equal to some $t_0 \in \RR$.  If we attempt to lift $i(T)$ to a Legendrian surface with respect to $\alpha$ we should choose to define the $t$-coordinate of $p' \neq p$ to be:
\begin{equation}
\label{eqn:lift4D}
t = t_0 + \int_{\gamma}ydw + \int_{\gamma}xdz,
\end{equation}
where $\gamma$ is a path from $p$ to $p'$.  This integral will be independent of path precisely when the $1$-form $i^*(ydw + xdz)$ is $0$ on $H_1(T)$.  Recall that $H_1(T)$ is generated by $\gamma_{zx}$ and $\gamma_{wy}$.

In order check for path-independence of the integral in Equation \ref{eqn:lift4D}, we evaluate the following:
\begin{equation}
i^*(ydw + xdz)[i^*(\gamma_{zx})] = \int_{i^*(\gamma_{zx})}i^*(ydw + xdz) = \int_{\gamma_{zx}}ydw + \int_{\gamma_{zx}}xdz = \int_{\gamma_{zx}}ydw.
\label{IntMeridian}
\end{equation}
\begin{equation}
i^*(ydw + xdz)[i^*(\gamma_{wy})] = \int_{i^*(\gamma_{wy})}i^*(ydw + xdz) = \int_{\gamma_{wy}}ydw + \int_{\gamma_{wy}}xdz = \int_{\gamma_{wy}}xdz.
\label{IntLongitude}
\end{equation}
Since $G_{zx}$ and $G_{wy}$ are Lagrangian grid diagrams, these integrals will both evaluate to $0$ and we get a well-defined lift to a Legendrian torus in $\mathbb{R}^5$ using Equation \ref{eqn:lift4D}.  Furthermore, the Lagrangian crossing conditions guarantee that the lift will be embedded.  Let $L$ be the lift of $i(T)$ obtained from Equation \ref{eqn:lift4D}.  Define $\pi_t : \mathbb{R}^5 \rightarrow \mathbb{R}^4$ to be the projection $(w,x,y,z,t) \mapsto (w,x,y,z)$. Then $\pi_t(L) = i(T)$, i.e. the torus determined by $H\Gamma$ is the Lagrangian projection of the Legendrian torus $L$.  Thus we obtain the following:

\begin{theorem}
\label{thm:lift}
The torus determined by a Lagrangian hypercube diagram $H\Gamma$ lifts to an embedded Legendrian torus $L \subset (\RR^5, \xi)$.  Furthermore, the generators $\gamma_{zx}$ and $\gamma_{wy}$ lift to curves $\tilde{\gamma}_{zx}$ and $\tilde{\gamma}_{wy}$ that generate $H_1(L)$.
\end{theorem}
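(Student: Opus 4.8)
The plan is to produce $L$ as the graph of a single-valued height function $t\colon T \to \RR$ over the immersed Lagrangian torus $i(T)$, with $t$ prescribed by Equation~\ref{eqn:lift4D}, and then to verify four things in turn: that $t$ is well defined, that its graph is Legendrian, that the graph is embedded, and that the lifts of the two preferred loops generate $H_1(L)$.

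First I would check that the $1$-form $i^*(y\,dw + x\,dz)$ is closed on $T$. Since $d(y\,dw + x\,dz) = dy\wedge dw + dx\wedge dz = -\omega$ and $i(T)$ is Lagrangian for $\omega = dw\wedge dy + dz\wedge dx$ (Theorem~\ref{thm:torus} and the remark following it), we have $i^*\omega = 0$, so the form is closed and its integral defines a pairing on $H_1(T)$. Path independence of Equation~\ref{eqn:lift4D} is then equivalent to the vanishing of the two periods over the generators $\gamma_{zx}, \gamma_{wy}$ of $H_1(T)$. These periods are computed in Equations~\ref{IntMeridian} and~\ref{IntLongitude}, where each collapses to the closed-loop area integral appearing in Condition~\ref{eqn:areaInt} for one of the Lagrangian grid diagrams $G_{zx}$, $G_{wy}$, and hence vanishes. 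Thus $t$ is a well-defined smooth function on $T$. Because $dt$ agrees with $i^*(y\,dw + x\,dz)$ along the graph, the contact form $\alpha = dt - y\,dw - x\,dz$ restricts to zero on the tangent planes of $L$; as $\dim L = 2$, this means $L$ is an isotropic surface of maximal dimension, i.e.\ Legendrian.

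The crux of the argument, and the step I expect to be the main obstacle, is embeddedness. The map $i$ is injective off its double-point set, which consists of the double-point circles arising from the individual crossings of $G_{zx}$ and of $G_{wy}$, together with the isolated quadruple points at which a $G_{zx}$-circle meets a $G_{wy}$-circle. Over a double-point circle coming from a single crossing $c$, the two local sheets of the graph differ in height by $\pm\Delta t(c)$, which is nonzero by the crossing condition~\ref{eqn:crossingInt} for that grid, so the two sheets are disjoint in $\RR^5$. Over a quadruple point, where a $zx$-crossing $c_i$ meets a $wy$-crossing $c_j'$, I would work out the local model in which four sheets meet and show that, relative to one of them, the four sheets sit at heights $0$, $\Delta t(c_i)$, $\Delta t(c_j')$, and $\Delta t(c_i)+\Delta t(c_j')$ up to signs. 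These four heights are pairwise distinct exactly when $|\Delta t(c_i)| \neq |\Delta t(c_j')|$, which is precisely the Lagrangian crossing condition imposed in the definition of $H\Gamma$. Together with the single-crossing case this shows that $p \mapsto (i(p), t(p))$ is injective, so $L$ is an embedded torus.

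Finally, by construction $\pi_t$ carries $(i(p), t(p))$ back to $i(p)$, so the lift $T \to L$, $p \mapsto (i(p), t(p))$, is a diffeomorphism with inverse $\pi_t|_L$. It therefore induces an isomorphism $H_1(T) \cong H_1(L)$ sending the generators $\gamma_{zx}, \gamma_{wy}$ to their lifts $\tilde{\gamma}_{zx}, \tilde{\gamma}_{wy}$, which consequently generate $H_1(L) \cong \ZZ^2$.
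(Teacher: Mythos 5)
Your proposal follows essentially the same route as the paper: define $t$ by Equation~\ref{eqn:lift4D}, verify path-independence by computing the periods of $i^*(y\,dw+x\,dz)$ over $\gamma_{zx}$ and $\gamma_{wy}$ exactly as in Equations~\ref{IntMeridian} and~\ref{IntLongitude}, and invoke the Lagrangian crossing conditions for embeddedness. In fact you supply more detail than the paper does on the embeddedness step (the paper asserts it in one sentence), and your analysis of the four sheet heights $0$, $\Delta t(c_i)$, $\Delta t(c_j')$, $\Delta t(c_i)+\Delta t(c_j')$ at a quadruple point correctly identifies $|\Delta t(c_i)|\neq|\Delta t(c_j')|$ as exactly the needed condition.
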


\begin{remark}
\label{rmk:lagCross}
If we omit the Lagrangian crossing conditions from the definition of a Lagrangian hypercube diagram, then the above procedure will still produce an \emph{immersed} Legendrian torus in $\RR^5$, but it will not, in general, be embedded.
\end{remark}

\begin{example}
\label{ex:3x3unknot}

Figure \ref{fig:unknot3} shows a schematic picture of a Lagrangian hypercube diagram where all grid-projections are unknots as in Example \ref{ex:r0}.  By Lemma \ref{thm:lift}, the torus determined by this Lagrangian hypercube diagram lifts to a Legendrian torus in $(\RR^5, \xi)$.

\end{example}

\section{Proof of Theorem \ref{thm:main1}}
\label{section:compute}

With the rotation class understood to be an element of $[T, U(2)]$ we see from Theorem \ref{isom} that the class may be identified with a pair of integers corresponding to the elements of $\pi_1 (U(2))$ determined by a meridian and longitude of the torus.  Before proving Theorem \ref{thm:main1} we identify an explicit generator of $\pi_1(U(2))$.  Recall that $U(2)$ parametrizes \emph{framed} Lagrangians of $(\RR^2, \omega)$.  Identify the $yx$, $xy$, $yz$, and $zy$ planes with the following matrices:
\[U_{xy} = \left( \begin{array}{cc}
0 & i  \\
i & 0 \end{array} \right),
U_{yx} = \left( \begin{array}{cc}
 0 & i  \\
-i & 0 \end{array} \right), 
U_{yz} = \left( \begin{array}{cc}
 0 & i  \\
-1 & 0 \end{array} \right),
U_{zy} = \left( \begin{array}{cc}
0 & i  \\
1 & 0 \end{array} \right). \]
Note that $U_{xy}$, $U_{xy}$, $U_{yz}$, and $U_{zy}$ correspond to unitary Lagrangian frames (cf. \cite{mcduff}):
\[U_{xy} \mapsto \left( \begin{array}{cc}
0 & 0  \\
0 & 0  \\
0 & 1  \\
1 & 0  \end{array} \right),
U_{yx} \mapsto \left( \begin{array}{cc}
 0 & 0  \\
 0 & 0  \\
 0 & 1  \\
-1 & 0  \end{array} \right), 
U_{yz} \mapsto \left( \begin{array}{cc}
 0 & 0  \\
-1 & 0  \\
 0 & 1  \\
 0 & 0  \end{array} \right),
U_{zy} \mapsto \left( \begin{array}{cc}
 0 & 0  \\
 1 & 0  \\
 0 & 1  \\
 0 & 0  \end{array} \right),\]
 Note that as maps from $\RR^2 \rightarrow \RR^4$ these frames produce $xy$, $(-x)y$, $(-z)y$, and $zy$-planes respectively.  Geometrically, this matches up with the fact that the Lagrangian planes along an $xz$-slice of the hypercube will be given by a positively or negatively oriented $\partial_x$ or $\partial_z$ vector paired with a positively oriented $\partial_y$-vector.  

\medskip

Choose $U_{xy}$ to be the basepoint.  We define a loop $\gamma: [0,1] \rightarrow U(2)$ that begins at $U_{xy}$ and rotates through $U_{yz}$, $U_{yx}$ and $U_{zy}$.  We will define $\gamma$ in $4$ pieces.  First, define a map $\hat{\gamma}: [0,1] \rightarrow U(2)$ as follows:
\[\hat{\gamma} (t) = \left( \begin{array}{cc}
1 & 0  										\\
0 & e^{\frac{\pi}{2} i t} \end{array} \right).\]
Then, define $\gamma_1(t) = \hat{\gamma}(t)U_{xy}$, $\gamma_2(t) = \hat{\gamma}(t)U_{yz}$, $\gamma_3(t) = \hat{\gamma}(t)U_{yx}$, and $\gamma_4(t) = \hat{\gamma}(t)U_{zy}$.  Finally, define $\gamma(t) = \gamma_1 \star \gamma_2 \star \gamma_3 \star \gamma_4$. Thus $\gamma$ corresponds to a rotation of Lagrangian planes, beginning at an $xy$-plane, and rotating through $yz$, $yx$, and $zy$-planes.

\begin{lemma}
\label{lemma:gen}
The loop $\gamma$ represents a generator of $\pi_1(U(2))$.
\end{lemma}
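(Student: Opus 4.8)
The plan is to detect the homotopy class of $\gamma$ using the determinant homomorphism $\det\colon U(2)\to U(1)$. Recall that the determinant fibration $SU(2)\hookrightarrow U(2)\to U(1)$ has fiber $SU(2)\cong S^3$, which is simply connected; the long exact sequence of homotopy groups then forces $\det_*\colon \pi_1(U(2))\to \pi_1(U(1))\cong\ZZ$ to be an isomorphism. Consequently it suffices to show that the loop $\det\circ\gamma$ in $U(1)=S^1$ has winding number $\pm 1$, and I would prove the lemma by computing this winding number explicitly.

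First I would verify that $\gamma$ is genuinely a closed loop based at $U_{xy}$, since it is assembled by concatenation from four pieces. Writing $\hat\gamma(1)=\mathrm{diag}(1,i)$, a short matrix computation checks the four matching identities $\hat\gamma(1)U_{xy}=U_{yz}$, $\hat\gamma(1)U_{yz}=U_{yx}$, $\hat\gamma(1)U_{yx}=U_{zy}$, and $\hat\gamma(1)U_{zy}=U_{xy}$. Because $\hat\gamma(0)=\Id$, these identities guarantee that each $\gamma_j(1)$ agrees with $\gamma_{j+1}(0)$ and that $\gamma_4(1)=U_{xy}=\gamma_1(0)$, so the concatenation $\gamma_1\star\gamma_2\star\gamma_3\star\gamma_4$ is continuous and closed.

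Next I would compute the relevant determinants directly:
$$\det U_{xy}=1,\quad \det U_{yz}=i,\quad \det U_{yx}=-1,\quad \det U_{zy}=-i,\quad \det\hat\gamma(t)=e^{\frac{\pi}{2}it}.$$
By multiplicativity of the determinant, on the $j$-th piece we get $\det\gamma_j(t)=e^{\frac{\pi}{2}it}\cdot\det(\text{base matrix})$, so that $\det\gamma_1,\det\gamma_2,\det\gamma_3,\det\gamma_4$ sweep out the four consecutive quarter-arcs of $S^1$ running monotonically from $1$ to $i$, from $i$ to $-1$, from $-1$ to $-i$, and from $-i$ back to $1$. Concatenating, $\det\circ\gamma$ traverses the unit circle exactly once counterclockwise, hence has winding number $+1$ and is a generator of $\pi_1(U(1))$.

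Since $\det_*$ is an isomorphism, it follows that $\gamma$ is a generator of $\pi_1(U(2))$, as claimed. The only genuine subtlety is the bookkeeping that ensures $\gamma$ is continuous and closed, and keeping track of the correct base point of each quarter-arc; once that is in place the determinant computation is entirely routine. The geometric content worth emphasizing is that rotating a Lagrangian frame through the $xy$, $yz$, $yx$, and $zy$ planes is exactly one full loop in $U(1)$ under $\det$, which is what makes $\gamma$ the natural generator to use in the rotation-class computation of Theorem~\ref{thm:main1}.
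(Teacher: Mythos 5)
Your proof is correct and follows essentially the same route as the paper, which simply observes that $\det\colon U(2)\to U(1)$ induces an isomorphism on $\pi_1$ carrying $\gamma$ to a generator of $\pi_1(U(1))$. You have merely filled in the details the paper leaves implicit (the continuity of the concatenation and the explicit winding-number computation), and those computations check out.
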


\begin{proof}
Observe that the determinant, $det:  U(2) \rightarrow U(1)$  induces an isomorphism on $\pi_1$ that takes $\gamma$ to a generator of $\pi_1(U(1))$.
\end{proof}

The same argument will show that there is a generator for $\pi_1(U(2))$ given by acting on matrices $U_{xy}$, $\hat{U}_{yx}$, $U_{xw}$, and $U_{wx}$ on the left by:
\[\tilde{\gamma} (t) = \left( \begin{array}{cc}
e^{\frac{\pi}{2} i t} & 0  										\\
0 										& 1 \end{array} \right).\]
Note that $U_{yx} \neq \hat{U}_{yx}$ as matrices in $U(2)$ but they give rise to the same Lagrangian planes, with the same orientation.  While $U_{yx}$ corresponds to a unitary Lagrangian frame giving rise to the Lagrangian plane $\{ -\partial_x, \partial_y\}$, $\hat{U}_{yx}$ gives rise to the Lagrangian plane $\{\partial_x, -\partial_y\}$.

\medskip

Much of the content of the paper to this point has been building up toward presenting the following proof. Our discussion of Lagrangian grid diagrams in Section \ref{section:grids} enables us to define an immersed Lagrangian torus corresponding to a Lagrangian hypercube diagram as in Theorem \ref{thm:torus}.  Lemma \ref{thm:lift} shows how to obtain a Legendrian torus from the Lagrangian hypercube diagram.  Having determined easy methods for computing the rotation number of the Lagrangian grid diagrams (Lemma \ref{lemma:rotation}), we are ready to prove Theorem \ref{thm:main1}.
 
\medskip

\begin{proof}
Lemma \ref{thm:lift} guarantees that the lift, $L$, exists.  We must see that the image of $r(L) \in [T, U(2)]$ under the isomorphism defined in Theorem \ref{isom} is $(w(G_{zx}), w(G_{wy}))$.  $G_{zx}$ and $G_{wy}$ each correspond to one of the two factors of $T$.  Let $[f_{zx}]$ and $[f_{wy}]$ be the elements of $\pi_1(U(2))$ determined by $G_{zx}$ and $G_{wy}$ (since $G_{zx}$ and $G_{wy}$ are constant, choice of base point is irrelevant).  Then the isomorphism defined in Theorem \ref{isom} maps $r(L)$ to $([f_{zx}],[f_{wy}])$.  We must show that $[f_{zx}] = w(G_{zx}) [\gamma]$.

Clearly, $w(G_{zx})$ computes how many times the tangent vector to the grid $G_{zx}$ wraps around the loop $\gamma$.  By Lemma \ref{lemma:gen} $[\gamma]$ generates $\pi_1(U(2))$.  A similar argument shows that $[f_{wy}] = w(G_{wy}) [\gamma]$.
\end{proof}

\medskip

\noindent {\bf Corollary \ref{cor:main11}} {\it Let $H_1(T_{H\Gamma})$ be generated by $i(\gamma_1)$ and $i(\gamma_2)$ (as in Theorem \ref{thm:torus}).  The Maslov index, $\mu: H_1(T_{H\Gamma}) \rightarrow \ZZ$ can be computed directly.  For $A = (a,b) \in H_1(T_{H\Gamma})$, 
$$\mu(A) = 2aw(G_{zx}) + 2bw(G_{wy})$$.}

\begin{proof}
Given an embedded loop $\gamma : S^1 \rightarrow T_{H\Gamma}$ representing a primitive class $A \in H_1(T_{H\Gamma})$, for any $p \in S^1$, $T_{\gamma(p)}T_{H\Gamma}$ is a Lagrangian plane, $L_{\gamma(p)}$.  Thus we obtain a map $S^1 \rightarrow Lag(\CC^2)$ such that $p  \mapsto L_{\gamma(p)}$.  The isomorphism defined in the proof of Theorem \ref{thm:main1} is valid here as well, once we identify planes that differ only in orientation, which produces a factor of $2$.
\end{proof}

\noindent {\bf Corollary \ref{cor:main12}} {\it The Maslov number is $2gcd(w(G_{zx}), w(G_{wy}))$}.

\begin{proof}
Follows directly from the previous corollary and the fact that the Maslov number is the smallest positive number that is the Maslov index of a non-trivial loop in $H_1(T_{H\Gamma})$ and $0$ if every non-trivial loop has Maslov index $0$ (cf. \cite{rotation}.  
\end{proof}

\bigskip
\section{Proof of Theorem \ref{thm:main2} and Examples}
\label{section:examples}
\bigskip

Before proceeding with the proof of Theorem \ref{thm:main2} we establish a few preliminary results.  The construction of Theorem \ref{thm:construction} can be used to produce a hypercube diagram (in the sense of \cite{Bald}) given any pair of Lagrangian grid diagrams.  However if the Lagrangian crossing conditions are not satisfied by the pair of Lagrangian grid diagrams, the resulting Legendrian torus will not be embedded (cf. Remark \ref{rmk:lagCross}).  Theorem \ref{thm:expandCrossings}, \ref{thm:stabilize}, and Corollary \ref{cor:stabilize} show that for any pair of topological knots, and any rotation numbers, one may find a pair of Lagrangian grid diagrams such that the Lagrangian crossing conditions are satisfied and hence construct a Lagrangian hypercube diagram that lifts to an embedded Legendrian torus.  

\begin{theorem}
\label{thm:expandCrossings}
Let $G$ be a Lagrangian grid diagram with an upper-right corner.  Enumerate the crossings of $G$ by $\{c_i\}$.  Then, for any $M > 0$ there is another Lagrangian grid diagram $G'$, representing the same topological knot and having the same rotation number as $G$, such that $|\Delta t(c_i')| > M$ for all $i$.  
\end{theorem}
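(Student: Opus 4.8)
The plan is to realize, combinatorially, a uniform dilation of the immersed curve underlying $G$. Dilating the plane by a factor $k$ multiplies every enclosed signed area by $k^2$, and by Corollary \ref{cor:areaCalc} each $\Delta t(c_i)$ is exactly such a signed area, computed over the closed loop from the crossing $c_i$ back to itself. Since $G$ is a Lagrangian grid diagram, each $\Delta t(c_i)\neq 0$, and because the heights and lengths appearing in the area sum are (half-)integers, $|\Delta t(c_i)|\geq \tfrac12$; hence after dilation $|\Delta t(c_i')| = k^2|\Delta t(c_i)|$, so choosing $k$ with $k^2 > 2M$ forces $|\Delta t(c_i')| > M$ at every original crossing. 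Dilation also fixes the condition $\int_G y\,dw = 0$ (it scales $0$ to $0$) and leaves the cyclic sequence of corners unchanged, so by Lemma \ref{lemma:rotation} the rotation number is unaffected and the topological knot type is preserved.

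First I would perform the dilation grid-theoretically, by inserting $k-1$ empty rows into each of the $n$ horizontal gaps and $k-1$ empty columns into each of the $n$ vertical gaps of $G$. Relabelled, a horizontal edge formerly of length $\ell$ now has length $k\ell$, and the marking rows are spread out so that heights are scaled by $k$ up to an overall affine shift. Because each crossing loop is closed, the integral $\oint y\,dw$ is invariant under a global vertical translation, so $\oint(ky+c)\,d(kw) = k^2\oint y\,dw$; thus the insertion multiplies every $\Delta t(c_i)$ by exactly $k^2$, as claimed. The resulting object is an immersed grid diagram in every respect except that it contains empty rows and columns, so it is not yet a grid diagram.

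Next I would remove these empty rows and columns exactly as in the proof of Theorem \ref{thm:anyRotation}: using the upper-right corner hypothesis, stabilize in the right-most column and commute so as to open the knot into an arc based at the top-right marking, then thread this arc monotonically up the right margin and across the top (the configuration of Figure \ref{fig:gridFill}) so that it visits every empty row and column. This produces an almost Lagrangian grid diagram (Definition \ref{almostLag}) with no empty rows or columns; I would then cap it off by Lemma \ref{lemma:capArc} to obtain a genuine Lagrangian grid diagram $G'$ of the same knot type and winding number, adjusting the rotation number back to $r(G)$ with Lemma \ref{lemma:anyRotation} if it has drifted.

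The main obstacle is verifying that this cleanup introduces no crossing with small $|\Delta t|$, since the theorem requires the bound at \emph{all} crossings of $G'$, not merely the images of the $c_i$. The key observation is that the threading and capping arcs live entirely in the dilated top-right margin, so any crossing they create bounds a loop spanning a vertical or horizontal extent of order $k$, giving $|\Delta t|$ of order at least $k$; and a monotone threading arc has no self-crossings. I would therefore arrange the threading so that its crossings with the original strands, together with the capping corners, all lie in the inflated region, and enlarge $k$ if necessary so that these auxiliary crossings also satisfy $|\Delta t| > M$, which completes the argument.
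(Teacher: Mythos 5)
Your proposal follows essentially the same route as the paper's proof: dilate the diagram by a factor $k$ so that, by the signed-area interpretation of Corollary \ref{cor:areaCalc}, every $|\Delta t(c_i)|$ is multiplied by $k^2$ and hence exceeds $M$ for $k$ large (the original values being nonzero and bounded below by a fixed constant), then repair the resulting empty rows and columns by a configuration attached at the upper-right corner, and finally control the finitely many crossings created by the repair. The one place where you are materially looser than the paper is that last step. Your inference that a crossing whose loop ``spans a vertical or horizontal extent of order $k$'' must have $|\Delta t|$ of order at least $k$ is not valid as stated: $\Delta t$ is a \emph{signed} area over the loop, and a geometrically large loop can enclose positive and negative contributions that nearly cancel, so largeness of extent gives no lower bound by itself. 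The paper closes exactly this gap by exhibiting a specific repair configuration (Figure \ref{fig:gullWings}) that creates precisely four new crossings $d_1,\dots,d_4$ and is designed so that the cancelling regions pair off explicitly and the residual areas at the $d_i$ can be read off to be at least $jk+1$, growing with $k$. Since you correctly identify this as the main obstacle and your proposed remedy (confine the threading and capping to the inflated margin and enlarge $k$) is precisely what the explicit configuration accomplishes, I would count your argument as the same proof with its crucial final verification left at the level of intention: to make it complete you would need to write down your threading configuration concretely and compute, not merely estimate, the $\Delta t$ of each auxiliary crossing.
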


\begin{proof}
Scale $G$ by $k \in \ZZ$ (each segment of the diagram of length $\ell$ becomes a segment of length $k \ell$).  This produces a diagram satisfying the Lagrangian conditions (Equations \ref{eqn:areaInt} and \ref{eqn:crossingInt}), but, of course, it will not be a grid diagram, due to empty rows and columns.  However, the area of each rectangle (as in Corollary \ref{cor:areaCalc}) will be multiplied by $k^2$.  Therefore, $|\Delta t(c_i)|$ may be made arbitrarily large for all $i$.  We must then show that the empty rows and columns may be filled in, while preserving the Lagrangian grid conditions.

By following the techniques of Theorem \ref{thm:anyRotation} we may assume that the upper-right corner of $G$ (prior to scaling) has a horizontal and vertical edge of length $1$ or $2$.  Begin by inserting one additional row and column at the upper-right corner.  The additional area created by this will be either $2k+1$, $3k+1$, or $4k+1$ depending on the initial lengths of the horizontal and vertical edges of the upper-right corner.  Then attach the configuration shown in Figure \ref{fig:gullWings}.  The unshaded regions will be equal in area, but with opposite sign due to the symmetry between empty rows and columns after scaling the initial grid. The dark-grey regions will also be equal in magnitude but with opposite sign.  Finally the light-grey region at the top right may be extended so that it is of area $2k+1$, $3k+1$, or $4k+1$ (an even or odd area may be acheived by placing an additional box as shown bythe dotted lines at the upper-right corner of Figure \ref{fig:gullWings}).

Finally, observe that for all of the original crossings, $\Delta t$ has been scaled up by a factor of $k^2$.  However, this procedure creates $4$ additional crossings:  $d_1$, $d_2$, $d_3$, and $d_4$.  By choosing $k$ sufficiently large, and possibly making our initial grid diagram larger, we may ensure that $min{|\Delta t(d_i)|} \geq jk+1$ for $j = 2,3,4$.
\end{proof}

\begin{figure}[h]
\includegraphics[scale = 1]{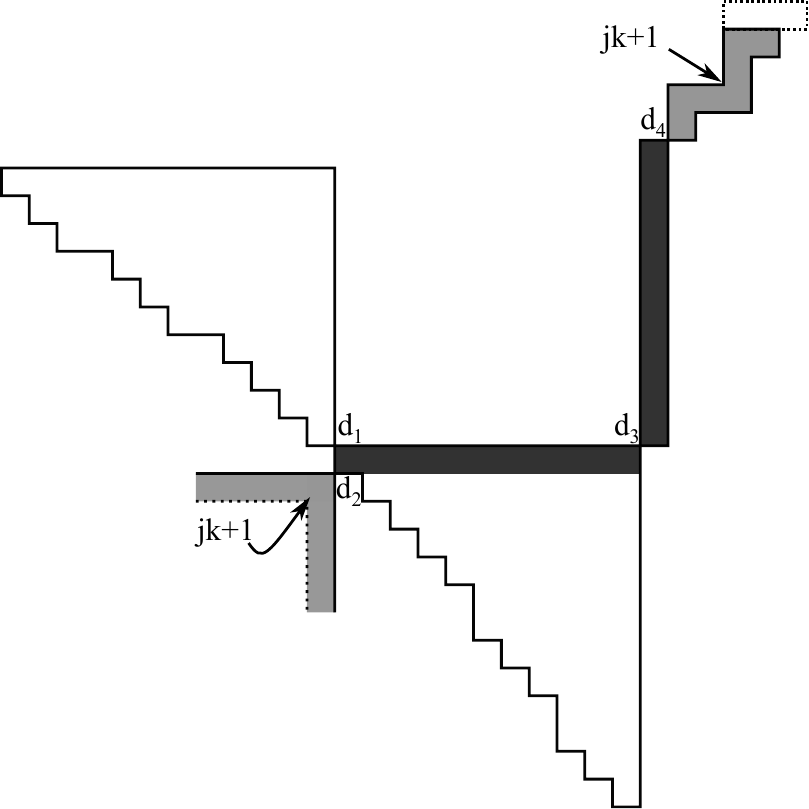}
\caption{Configurations used to fill in empty rows and columns ($j = 2,3,4$).}
\label{fig:gullWings}
\end{figure}

We showed in the previous theorem that the minimum value of $|\Delta t(C_i)|$ may be made arbitrarily large for a Lagrangian grid diagram, the following theorem shows that we may make Lagrangian grid diagrams arbitrarily large, while keeping $\Delta t(c_i)$ small.  

\begin{theorem}
\label{thm:stabilize}
Given a Lagrangian grid diagram $G$ of size $n$, there exists $m > n$ such that one may modify $G$ to obtain a Lagrangian grid diagram, $G'$ of size $n'$ for any $n' > m$, with the same topological type and rotation number as $G$.  Moreover, if $\Delta_1$ is the maximum over $|\Delta t (c_i)|$ for $G$ and $\Delta_2$ is defined similarly for $G'$, then $\Delta_2 \leq \Delta_1 + |a| + 1$.
\end{theorem}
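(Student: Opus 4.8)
The plan is to enlarge $G$ by a boundary extension: for a prescribed target size $n'$, I insert $n'-n$ empty rows along the top and $n'-n$ empty columns along the right-hand side (strictly beyond every existing marking), and then reconnect the strand through the new region by a single explicit ``staircase'' configuration attached at the upper-right corner. First I would normalize $G$ exactly as in the proofs of Theorem~\ref{thm:anyRotation} and Lemma~\ref{lemma:anyRotation}: after a bounded number of stabilizations and commutations I may assume $G$ has a marking in its upper-right corner, and I let $a$ denote the horizontal edge emanating from that corner. The number of rows and columns consumed by this normalization, plus the minimal width of the attaching block, fixes the threshold $m$; for any $n'>m$ there is room to insert the extension and land on size exactly $n'$.

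The crucial point is that the insertion is made along the boundary rather than through the interior. Consequently no rectangle in the area decomposition of Corollary~\ref{cor:areaCalc} belonging to an existing crossing changes its area, so every old value $\Delta t(c_i)$ is preserved verbatim. I would tune the free edge lengths of the staircase (the same degree of freedom exploited in Theorem~\ref{thm:expandCrossings}) so that its total signed contribution to the integral of Equation~\ref{eqn:areaInt} is zero, and so that the bends it introduces split into equal numbers of clockwise and counterclockwise corners; by Lemma~\ref{lemma:rotation} this leaves the rotation number unchanged, and because the reconnection is a planar isotopy of the strand through empty cells it preserves the topological knot type. The result is a genuine Lagrangian grid diagram $G'$ of size $n'$, realizing $G$'s knot type and rotation number. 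Should a small residual area or rotation mismatch survive the tuning, it can be absorbed by capping via Lemma~\ref{lemma:capArc} and correcting with Lemma~\ref{lemma:anyRotation}, at the cost of enlarging $m$ so that exact size control is retained.

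For the quantitative estimate I would examine the crossings created by the staircase. Because the staircase is thin, each of its self-crossings encloses a rectangle of height one, and the only crossing whose enclosing rectangle abuts the corner edge has width at most that of $a$; by the signed-area calculation of Corollary~\ref{cor:areaCalc} its $|\Delta t|$ is therefore at most $|a|+1$, while all remaining new crossings cancel in sign or have strictly smaller enclosed area. Since the new rows and columns are stacked in the boundary region and no new crossing's rectangle contains another, this estimate does not accumulate with $n'$. Combining the preserved old crossings (bounded by $\Delta_1$) with the new ones (bounded by $|a|+1$) gives the uniform bound $\Delta_2 \le \Delta_1 + |a| + 1$.

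The step I expect to be the main obstacle is designing the staircase so that it meets three demands at once: it must add exactly $n'-n$ rows and columns (so that every size above $m$, not merely an arithmetic progression, is attained), it must have vanishing signed area and balanced corners, and each crossing it creates must enclose only a height-one rectangle. Verifying that a single family of configurations, parametrized by its free edge lengths, can be tuned to satisfy all three conditions simultaneously --- without invoking the capping lemmas, which would destroy the exact control over $n'$ --- is where the genuine work of the proof lies.
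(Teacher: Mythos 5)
Your proposal follows essentially the same route as the paper's proof: normalize $G$ to have a marked upper-right corner, attach a configuration there that consumes the new rows and columns, close the arc with Lemma~\ref{lemma:capArc}, and bound the new crossings' $|\Delta t|$ by $|a|+1$ while the old crossings are untouched. The one obstacle you single out --- tuning a single staircase to simultaneously have vanishing signed area, balanced corners, and exact size --- is not actually load-bearing, since the paper (like your own fallback) simply attaches a block of size $2k$ leaving a residual $t$-difference of $a\pm 1$, lets Lemma~\ref{lemma:capArc} absorb it, and controls the parity of the final size by choosing which capping configurations (reducing the difference by $1$ or by $2$) to use.
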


\begin{figure}[h]
\includegraphics[scale = .7]{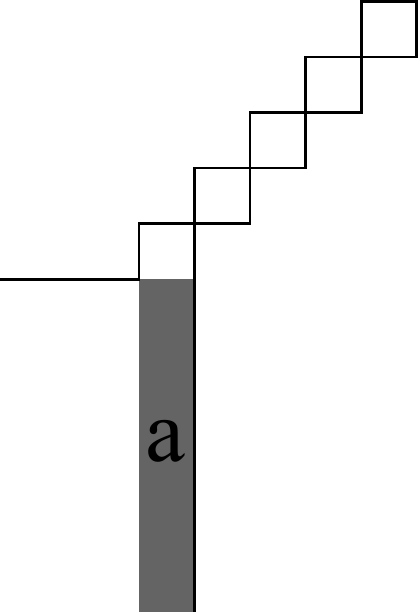}
\caption{Configuration used to enlarge a Lagrangian grid diagram.}
\label{fig:stabilize}
\end{figure}

\begin{proof}
We may assume that $G$ has an upper-right corner.  Let $k \in \ZZ$.  At the top right corner of the grid, we stabilize and attach a configuration of size $2k$ as shown in Figure \ref{fig:stabilize}.  Since we began with a Lagrangian grid diagram, each new crossing created in this procedure will have $|\Delta t|$ equal to either $a \pm 1$ or $a$, and at the new top right corner, the $t$-coordinates will differ by $a \pm 1$.  We then apply Lemma \ref{lemma:capArc} to obtain a Lagrangian grid diagram. By carefully choosing wwhich configurations we use in applying Lemma \ref{lemma:capArc}, we may ensure that the Lagrangian grid diagram we obtain has even or odd size.  The statement about the bound on $\Delta_2$ is clear from the construction.
\end{proof} 

\begin{corollary}
\label{cor:stabilize}
Given two Lagrangian grid diagrams, $G$ and $G'$ of size $m$ and $n$, they may be stabilized to obtain Lagrangian grid diagrams representing the same two topological knots, without changing the rotation number, and such that if ${c_i}$ is the set of crossings in $G$ and ${c_j'}$ is the set of crossings in $G'$, $|\Delta t(c_i)| < |\Delta t(c_j')|$ for all $i,j$.
\end{corollary}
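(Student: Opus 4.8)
The plan is to exploit the two preceding results in tandem: Theorem~\ref{thm:expandCrossings} lets us push every crossing of one diagram above any chosen threshold, while Theorem~\ref{thm:stabilize} lets us enlarge the other diagram to an arbitrary matching size while keeping its largest $|\Delta t|$ under control. The strict ordering $|\Delta t(c_i)| < |\Delta t(c_j')|$ is then obtained by first fixing a numerical threshold determined by $G$ and then arranging for every crossing of $G'$ to exceed it. Throughout, both operations preserve the topological knot type and the rotation number (as asserted in Theorems~\ref{thm:expandCrossings} and \ref{thm:stabilize}), so the two output diagrams represent the required knots with the required rotation numbers, and matching their sizes is precisely what makes the pair usable as the grid projections of a hypercube diagram.

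First I would reduce to the case where both $G$ and $G'$ have an upper-right corner with short horizontal and vertical edges, as in the normalization used in the proof of Theorem~\ref{thm:anyRotation}; this is the hypothesis needed to invoke Theorems~\ref{thm:expandCrossings} and \ref{thm:stabilize}. Let $\Delta_1 = \max_i |\Delta t(c_i)|$ for $G$, and let $a$ be the constant produced when $G$ is stabilized as in Theorem~\ref{thm:stabilize}, so that enlarging $G$ to any admissible size yields a diagram whose crossings all satisfy $|\Delta t| \le \Delta_1 + |a| + 1 =: B$. Next I would apply Theorem~\ref{thm:expandCrossings} to $G'$ with $M = B$, producing a Lagrangian grid diagram (which I continue to call $G'$) of some size $N'$ all of whose crossings satisfy $|\Delta t(c_j')| > B$. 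Finally I would apply Theorem~\ref{thm:stabilize} to $G$, enlarging it to size exactly $N'$; since this step raises the maximal crossing value only to at most $B$, every crossing of the enlarged $G$ satisfies $|\Delta t(c_i)| \le B < |\Delta t(c_j')|$ for all $i,j$, which is the desired inequality.

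The crux of the argument---and the reason Theorem~\ref{thm:stabilize} was proved in the precise quantitative form it was---is the tension between two competing demands: to form a hypercube diagram we must eventually have $G$ and $G'$ of the same size, yet the size-equalizing stabilizations introduce new crossings that threaten the ordering. The bound $\Delta_2 \le \Delta_1 + |a| + 1$ is exactly what tames this: however large we must grow $G$ to reach $N'$, its crossing differences never climb past the fixed threshold $B$, while Theorem~\ref{thm:expandCrossings} has already lifted every crossing of $G'$ strictly above $B$. I expect the only delicate bookkeeping to be verifying that, after expanding $G'$, one can stabilize $G$ to \emph{precisely} the size $N'$ rather than merely to some size exceeding a bound; should $N'$ fall below the threshold $m$ attached to $G$ in Theorem~\ref{thm:stabilize}, one simply over-scales $G'$ in the application of Theorem~\ref{thm:expandCrossings} (choosing the scaling factor $k$ large) so that $N'$ comfortably exceeds $m$ while still forcing all of $G'$'s crossings past $B$.
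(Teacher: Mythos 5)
Your proposal is correct and follows essentially the same route as the paper: combine Theorem~\ref{thm:expandCrossings} (to push one diagram's crossings above a fixed threshold) with the quantitative bound $\Delta_2 \le \Delta_1 + |a| + 1$ of Theorem~\ref{thm:stabilize} (to equalize sizes without letting the other diagram's crossings climb past that threshold). The only difference is that you expand $G'$ and stabilize $G$ whereas the paper expands $G$ and stabilizes $G'$; your version actually produces the inequality in the direction literally stated in the corollary, and either direction suffices for the intended application to Theorem~\ref{thm:construction}.
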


\begin{proof}
Apply Theorem \ref{thm:expandCrossings} to $G$, choosing $k$ sufficiently large to guarantee that $k^2 > 4k+1$ and $2k+1 > max\{\Delta t(c_i')\} + |a| + 1$ where $a$ is as shown in Figure \ref{fig:stabilize}.   This guarantees that $min\{\Delta t(c_i)\} > max\{\Delta t(c_i')\} + |a| + 1$.  Then apply Theorem \ref{thm:stabilize} to $G'$ so that both grids are the same size. 
\end{proof}

\begin{theorem}
\label{thm:construction}
Let $G_{wy}$ and $G_{zx}$ be Lagrangian grid diagrams of the same size such that if ${c_i}$ is the set of crossings in $G_{wy}$ and ${c_j'}$ is the set of crossings in $G_{zx}$, then $|\Delta t(c_i)| \neq |\Delta t(c_j')|$ for all $i,j$.  Then, there is a Lagrangian hypercube diagram such that the $wy$ an $zx$-projections are given by these grids.
\end{theorem}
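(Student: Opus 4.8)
The hypotheses already supply everything except the markings: $G_{wy}$ and $G_{zx}$ are assumed to be Lagrangian grid diagrams satisfying the Lagrangian crossing conditions, so once markings are placed whose projections recover these two diagrams, the three non-marking clauses in the definition of $H\Gamma$ hold for free. The plan is therefore to reconstruct a $4$-dimensional oriented link in $C$ whose projection $\pi_{xz}$ is $G_{wy}$ and whose projection $\pi_{wy}$ is $G_{zx}$, to read off its markings as the $W,X,Y,Z$ data, and to verify the four marking conditions. I will assume for concreteness that each grid represents a knot; the general case is handled componentwise.

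First I would traverse the two knots in lockstep. Traversing $G_{wy}$ gives a cyclic list of its $2n$ cells, alternating $W$-cells and $Y$-cells and joined by alternating horizontal ($w$) and vertical ($y$) edges; traversing $G_{zx}$ gives a cyclic list of its $2n$ edges, alternating $z$-edges and $x$-edges and joined at alternating corners. Both lists have length $2n$ with the same alternating pattern, so I index them by a common parameter and pair the $i$-th $W$-cell with the $i$-th $z$-edge and the $i$-th $Y$-cell with the $i$-th $x$-edge. Each pair names a $zx$-flat whose level $(w,y)$ comes from the $G_{wy}$-cell and which carries the two endpoint-markings of the matched $G_{zx}$-edge: a $(Z,W)$-pair differing only in $z$ over a $W$-cell, an $(X,Y)$-pair differing only in $x$ over a $Y$-cell. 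This assigns all four coordinates to each of the $4n$ markings. Because distinct cells of $G_{wy}$ sit at distinct $(w,y)$-positions and the two markings sharing a flat differ in a single coordinate, no two markings collide; and by construction $\pi_{xz}$ retraces $G_{wy}$ (the $z$- and $x$-edges project to points) while $\pi_{wy}$ retraces $G_{zx}$ (the $w$- and $y$-edges project to points). That the reconstructed object really is a closed link follows from the fact that consecutive $G_{wy}$-cells are joined by an edge and consecutive $G_{zx}$-edges meet at a corner, which under the common indexing forces each consecutive pair of markings to agree in three of the four coordinates, so that every link segment is parallel to a coordinate axis in the cyclic pattern $W\to X\to Y\to Z$.

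The technical heart is the verification of the marking conditions, and here the main point is the following observation. Each of the $4n$ cubes is obtained by fixing one coordinate, i.e.\ by fixing a single row or column of one of the two grids; the two markings (resp.\ corners) of that row or column are joined by an edge of the grid, and under the synchronized indexing this forces the four markings lying in the cube to carry a common index, hence to form a \emph{single connected three-edge path} of the link, a ``staircase'' whose three edges run in the three long directions of the cube while its missing edge runs in the short direction. Counting markings along each fixed value of a coordinate then shows that each cube contains exactly one marking of each type, that the two interior vertices of the staircase are each the apex of a right angle spanning a flat with three markings, and that these are the only such flats; the remaining flat direction — precisely the $wy$- or $zx$-direction excluded by the fourth condition — is the direction of the middle staircase edge and always splits the four markings into two pairs, so it never collects three.

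Finally I would check that the flat types match the required vertices. At each interior vertex the two emanating edges point along two of the long directions, so the spanned flat is named by exactly those two directions, and a short case analysis over the four cube types shows the apex is always the marking prescribed by the fourth marking condition (for instance, in an $xyz$-cube the staircase is $X\to Y\to Z\to W$, and the vertices $Y$ and $Z$ span an $xy$-flat and a $yz$-flat respectively, exactly as required). The only delicate part of the whole argument is the coordinate bookkeeping in this last step; but once the staircase structure is in hand it is a finite verification, and repeating it across the four cube types completes the construction of $H\Gamma$.
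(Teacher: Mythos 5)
Your proposal is correct and takes essentially the same route as the paper: the paper's proof is precisely your synchronized traversal of the two oriented diagrams, written as the explicit coordinate assignment $Z_i\mapsto(w_{w,i},x_{z,i},y_{w,i},z_{z,i})$, $W_i\mapsto(w_{w,i},x_{x,i},y_{w,i},z_{x,i})$, and so on, which places a $(Z,W)$-pair over each $W$-cell and an $(X,Y)$-pair over each $Y$-cell exactly as you describe. The only difference is that you go on to verify the marking conditions via the staircase observation, a check the paper leaves entirely to the reader; your verification is sound.
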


\begin{proof}
Following the orientation of the diagram label the markings $W_0, Y_0, W_1, Y_1,...$ etc.  Do the same for $G_{zx}$.  Denote the coordinates of $W_i$ by $(w_{w,i},y_{w,i})$, $Y_i$ by $(w_{y,i},y_{y,i})$ etc.  Place $Z_i$ in the hypercube at position $(w_{w,i},x_{z,i},y_{w,i},z_{z,i})$, $W_i$ at position $(w_{w,i},x_{x,i},y_{w,i},z_{x,i})$, $X_i$ at position $(w_{y,i},x_{x,i},y_{y,i},z_{x,i})$,and $Y_i$ at position $(w_{y,i},x_{z,i+1},y_{y,i},z_{z,i+1})$ where $i$ is taken modulo $n$.
\end{proof}

Having developed the results on Lagrangian grid diagrams in Section \ref{section:grids}, and having shown in Theorems \ref{thm:construction}, \ref{thm:stabilize}, and Corollary \ref{cor:stabilize} we now have the necessary framework to complete the proof of Theorem \ref{thm:main2} below.

\begin{proof}
Given $(m,k) \in \ZZ^2$, and two knot types $K_1$ and $K_2$.  Theorem \ref{thm:anyRotation} allows one to construct Lagrangian grid diagrams $G_1$ and $G_2$ representing $K_1$ and $K_2$ with rotation numbers $m$ and $k$ respectively.  Corollary \ref{cor:stabilize} allows one to find Lagrangian grid diagrams, $G_1'$ and $G_2'$, of the same size representing the same topological knots and having the same rotation numbers as $G_1$ and $G_2$.  Applying Theorem \ref{thm:construction} enables us to construct a Lagrangian hypercube diagram such that $G_{zx} = G_1'$ and $G_{wy} = G_2'$.
\end{proof}

\begin{example}
\label{ex:0N}
One may construct a Lagrangian grid diagram for the unknot with arbitrary rotation number by following the construction shown in Figure \ref{fig:rotationN}.  To realize rotation number $r > 0$ construct the diagram as in Figure \ref{fig:rotationN} using $r+1$ horizontal bars of length $r$.  The resulting diagram will have size $2r+3$.  Let $G_{zx}$ be such a grid diagram.  Let $G_{wy}$ be the Lagrangian grid diagram for the unknot of size $2r+3$ given by the construction shown in Figure \ref{fig:rotation0}.  Then applying Theorem \ref{thm:construction}, Lemma \ref{thm:lift} and Theorem \ref{thm:main1} we obtain a Lagrangian hypercube diagram with rotation class $(r,0)$.  Figure \ref{fig:unknot3} shows the construction for $r = 1$.

Note that if $r = 0$ one must first apply Corollary \ref{cor:stabilize}.  However, for $r > 1$, $|\Delta t(c_i)|$ is never equal to $|\Delta t(c)|$ where $c$ is the unique crossing in $G_{zx}$ and $\{c_i\}$ is the set of crossings in $G_{wy}$.  

\begin{figure}[h]
\includegraphics[scale=.5]{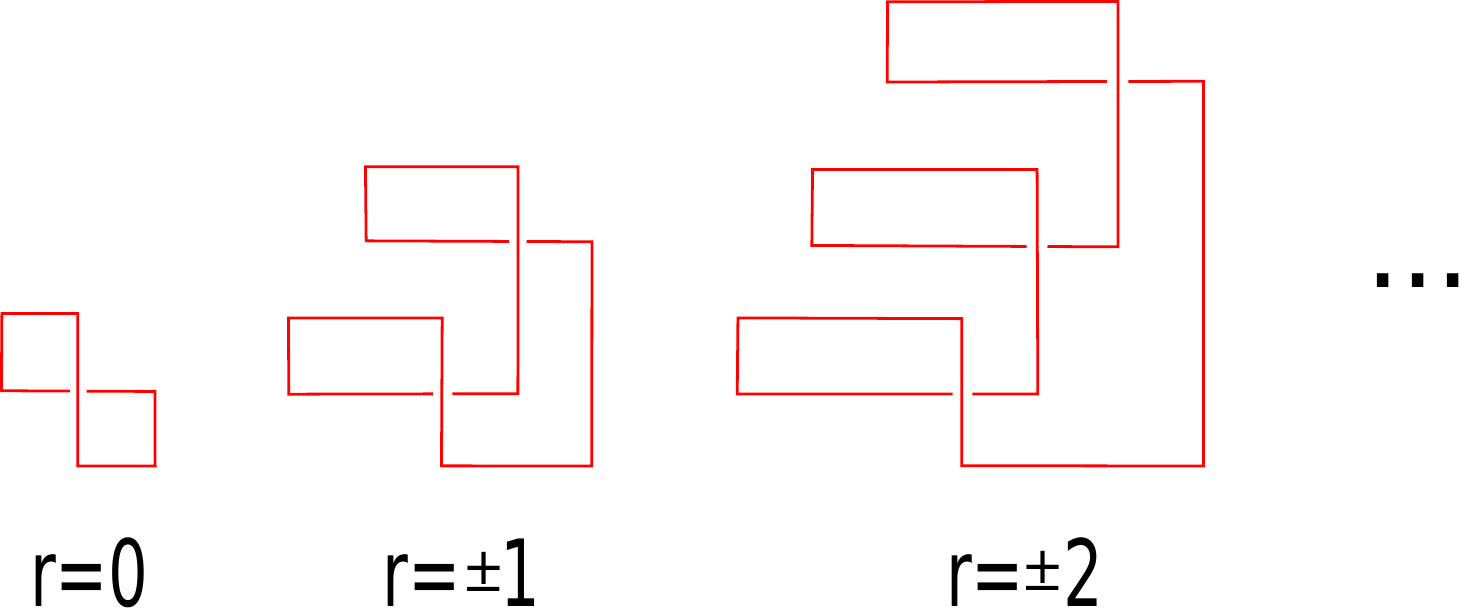}
\caption{Construction of a Lagrangian unknots with rotation number $0, \pm 1, \pm 2$.}
\label{fig:rotationN}
\end{figure}

\begin{figure}[h]
\includegraphics[scale=.5]{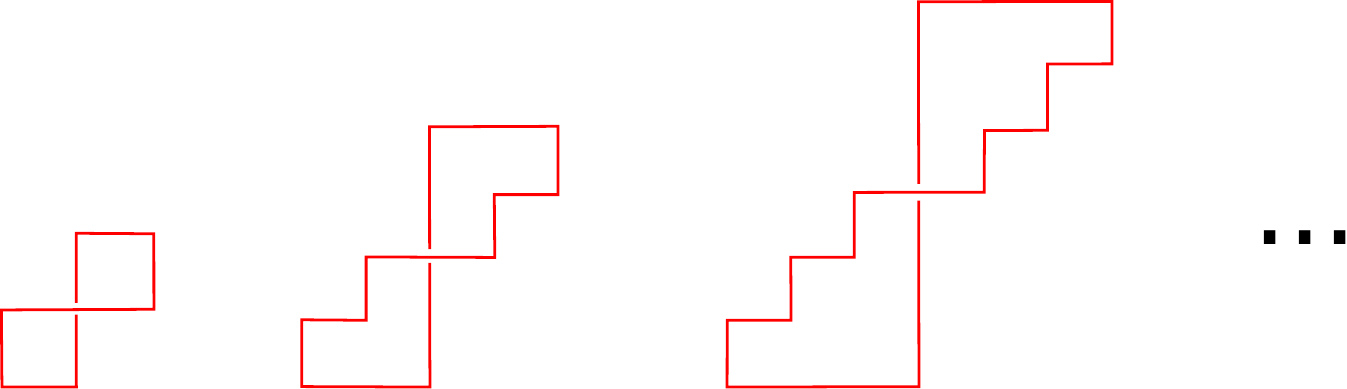}
\caption{Construction of a Lagrangian unknots with rotation number $0$.}
\label{fig:rotation0}
\end{figure}

\end{example}

\begin{example}
 
Figure \ref{fig:nontrivialhyper} shows a Lagrangian hypercube diagram with $G_{zx}$ representing a trefoil, and $G_{wy}$ representing a $(5,2)$ torus knot.  One may check that $G_{wy}$ has rotation number $0$, $G_{zx}$ has rotation number $1$, and hence, the Lagrangian hypercube diagram has rotation class $(1,0)$.  

\begin{figure}[h]
\includegraphics[scale=.4]{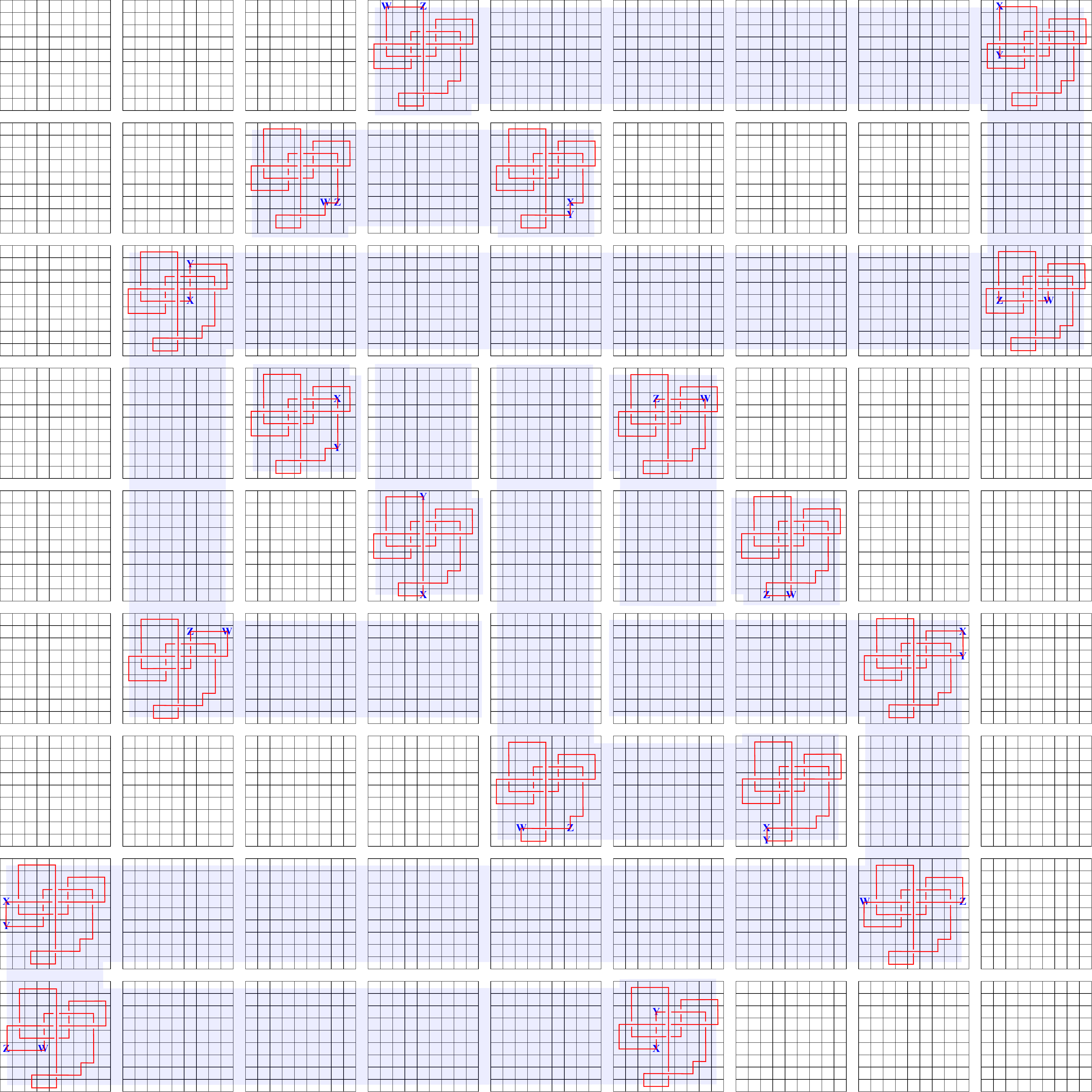}
\caption{Hypercube diagram with $G_{wy}$ representing a (5,2) torus knot, and $G_{zx}$ representing a trefoil.}
\label{fig:nontrivialhyper}
\end{figure}

\end{example}

\end{document}